\tikzset{individu/.style={draw,thick}}
\newcommand*\circled[1]{\tikz[baseline=(char.base)]{
            \node[shape=circle,draw,inner sep=1pt] (char) {#1};}}
\theoremstyle{plain}
\newtheorem{theorem}{Theorem}[section]
\newtheorem{corollary}[theorem]{Corollary}
\newtheorem{lemma}[theorem]{Lemma}
\newtheorem{proposition}[theorem]{Proposition}
\theoremstyle{definition}
\theoremstyle{remark}
\newtheorem{remark}[theorem]{Remark}
\numberwithin{equation}{section}
\newcommand{\N}{\mathbb{N}}
\newcommand{\Z}{\mathbb{Z}}
\newcommand{\R}{\mathbb{R}}
\newcommand{\C}{\mathbb{C}}
\newcommand{\T}{\mathbf{T}}
\newcommand{\ind}[1]{\mathbf{1}_{\left\{#1\right\}}}
\renewcommand{\bar}[1]{\overline{#1}}
\newcommand{\e}{\mathrm{e}}
\newcommand{\dd}{\mathrm{d}}
\DeclareMathOperator{\E}{\mathbb{E}}
\renewcommand{\P}{\mathbb{P}}
\newcommand{\Y}{\mathbf{Y}}
\newcommand{\PP}{\mathbf{P}}
\newcommand{\EE}{\mathbf{E}}
\renewcommand{\rho}{\varrho}
\renewcommand{\epsilon}{\varepsilon}
\title{Reinforced Galton-Watson processes I:\\  Malthusian exponents}
\author{Jean Bertoin\thanks{Institute of Mathematics, University of Zurich, Switzerland.} \and Bastien Mallein\thanks{LAGA - Institut Galil\'ee, Universit\'e Sorbonne Paris Nord,  France.}}
\date{}
\begin{document}

\maketitle

\begin{abstract} In a reinforced Galton-Watson process with  reproduction law $\boldsymbol{\nu}$ and memory parameter $q\in(0,1)$, the number of children of a typical individual either, with probability
$q$, repeats that of one of its forebears picked uniformly at random, or, with complementary probability  $1-q$, is given by an independent sample from $\boldsymbol{\nu}$. We estimate the average size of the population at a large generation, and in particular, we determine explicitly the  Malthusian growth rate
in terms of $\boldsymbol{\nu}$ and $q$. Our approach via the analysis of transport equations  owns much to works by Flajolet and co-authors.

\end{abstract}

\noindent \emph{\textbf{Keywords:}} Galton-Watson process, Mal\-thu\-sian growth exponent, stochastic reinforcement, transport equation, singularity analysis of generating functions.

\medskip

\noindent \emph{\textbf{AMS subject classifications:}}  60J80; 60E10; 35Q49

\section{Introduction and main result}
\label{sec:introduction}
We present first some motivations for this work, then our main result, and finally, we discuss techniques and sketch the plan of the rest of the article.
\subsection{Motivations}
Roughly speaking, stochastic  reinforcement refers to step-by-step modifications of the dynamics of a random process such that transitions that already occurred often  in the past are more likely to be repeated in the future.
The evolution  thus depends on the past of the process and not just on its current state, and is typically non-Markovian\footnote{Nonetheless, it is a remarkable fact observed first by Coppersmith and Diaconis that  linearly edge-reinforced random walks on a finite graph are actually mixtures of reversible Markov chains, with the mixing measure  given by the so-called ``magic formula''.}. We refer to  \cite{Pem} for a survey of various probabilistic models in this area, and merely recall that the concept of reinforcement notably  lies at the heart of machine learning. The general question of describing
how reinforcement impacts the long time behavior of processes has been intensively investigated for many years. In particular, there is a rich literature
on so-called linear  edge/vertex reinforced random walks, which culminates with the remarkable achievement  \cite{SabZen} in which many important earlier references can also be found.
In short, the purpose of the present work is to investigate the effects of reinforcement on branching processes.

More precisely, fix a reproduction law $\boldsymbol{\nu}$, that is $\boldsymbol{\nu}=(\nu(k))$ is a probability measure on $\Z_+=\{0,1, \ldots\}$. In a classical (Bienaym\'e-)Galton-Watson process, every individual reproduces independently of the others, and $\nu(k)$ is the probability that a typical individual has $k$ children at the next generation. The reinforced version that we took interest in
involves random repetitions of reproduction events, much in the same way as words are repeated in a well-known algorithm of Herbert Simon \cite{Simon}, or  steps  in a family of discrete time processes with memory  known as step-reinforced random walks (see \cite{BerRos, Bscaling, Buniver, Laulinthese} and references therein). Specifically, the reinforced evolution
depends on a memory parameter $q\in(0,1)$. For every individual, say $i$,  at a given generation $n\geq 1$, we  pick a forebear, say $f(i)$, uniformly at random on its ancestral lineage.
That is, $f(i)$ is the ancestor of $i$ at generation $u(n)$, where $u(n)$ is a uniform random sample from $\{0,\ldots, n-1\}$. Then, either with probability $q$, $i$ begets the same number of children as  $f(i)$, or with complementary probability $1-q$,  the number of children of $i$ is given by an independent sample from the reproduction law $\boldsymbol{\nu}$.
Implicitly,  for different individuals $i$, we use  independent uniform variables for selecting forebears $f(i)$ on their respective ancestral lineages, and the repetition events where given individuals $i$ reproduce as their respective selected forebears $f(i)$ are also assumed to be independent.

The reproduction of  an individual in a reinforced Galton-Watson process depends on that along  its entire ancestral lineage.
 It should be clear that, with the exception of the case of a few elementary reproduction laws,  this  invalidates fundamental features of Galton-Watson processes.  Notably, different individuals partly share the same ancestral lineage, and thus their descent are not independent. The Markov and the branching properties are lost, and even the simplest questions about Galton-Watson processes become challenging in the reinforced setting.

One naturally expects that reinforcement should enhance of the growth of branching processes. Let us briefly  analyze the elementary case of a binary branching when $\nu(2)=p$ and $\nu(0)=1-p$ for some $p\in(0,1)$. Plainly, for every non-empty generation, all the forebears  on any ancestral lineage had $2$ children, and therefore  the reinforced Galton-Watson process is in turn a binary Galton-Watson process, such that every individual aside the ancestor has probability $q+(1-q)p>p$ of having two children. In particular, the reinforced process survives forever with positive probability if and only if $q+(1-q)p>1/2$, which includes some subcritical reproduction laws with $p<1/2$ (for example it can survive for all $p > 0$ whevener $q>1/2$).

In the next section, we describe our main result on how reinforcement affects the Malthusian exponent, that is the growth rate of the averaged size of the population.
In another forthcoming work, we shall investigate in turn the impact of reinforcement on survival probabilities.

\subsection{Main result}
We first introduce some notation; the memory parameter $q\in(0,1)$ and the reproduction law $\boldsymbol{\nu}$ have been fixed and thus most often are omitted from the notation. The degenerate case when $\nu(k)=1$ for some $k\in \Z_+$ will be implicitly excluded.
We write  $Z=(Z(n))_{n\geq 0}$  for  the reinforced Galton-Watson process, where $Z(n)$ is the size of population at the $n$-th generation. We agree for simplicity that there is a single ancestor at generation $0$ (i.e. $Z(0)=1$) which has $Z(1)$ children, and denote  the conditional distribution of the reinforced Galton-Watson process given $Z(1)=\ell$ by $\P_\ell$.
We are mainly interested in the situation where $Z(1)$ is random and distributed according to the reproduction law ${\boldsymbol{\nu}}$, and we shall then simply write
  $\P=\P_{\boldsymbol{\nu}}$.

Perhaps the most basic question about $Z$  is to estimate the average size of the population for large times; let us start with a few elementary observations.
Reinforcement entails that the conditional probability for an individual to
 have $k$ children, given that each of  its ancestors also had $k$ children,
 equals $q+(1-q)\nu(k)$. By focussing on such individuals, we see that there is a sub-population excerpted from the reinforced Galton-Watson process that evolves as an ordinary Galton-Watson process with averaged reproduction size  equal to  $k(q+(1-q)\nu(k))> kq$. This points at the roles of the support of the reproduction law $\boldsymbol{\nu}$,
$$\mathrm{Supp}(\boldsymbol{\nu})\coloneqq \{k\geq 0: \nu(k)>0\},$$
  and of the  maximal possible number of children which we denote by
$$
\mathrm{k}^*\coloneqq \sup\{k\geq 0: \nu(k)>0\}.
$$
Namely, if $q \mathrm{k}^*\geq 1$, then not only does the reinforced Galton-Watson process survive with positive probability, but the  averaged population size grows at least exponentially fast with $\E(Z(n)) \geq c(q\mathrm{k}^*)^n$ for some $c>0$. We stress that a high averaged growth rate is achieved as soon as the reproduction law has an atom at some  large $k$, no matter how small the mass $\nu(k)$ of this atom is.

We  assume henceforth that the support of the reproduction law is bounded, viz.
$$\mathrm{k}^*<\infty,$$
and stress that otherwise, the averaged size of the population would grow super-exponentially fast.
 To state our main result, we  introduce the function
\begin{equation} \label{E:pi}
\Pi(t)\coloneqq  \prod (1-tk)^{\nu(k)(1-q) /q},  \qquad t\leq 1/\mathrm{k}^*,
\end{equation}
where the product in the right-hand side is taken over all integers $k$'s (actually, only the positive $k$'s in $\mathrm{Supp}(\boldsymbol{\nu})$ contribute to this product).
Observe that $\Pi(0)=1$, $1/\mathrm{k}^*$ is the sole zero of $\Pi$, and $\Pi$ is strictly decreasing on $[0,\frac{1}{\mathrm{k}^*}]$. We can then define
\begin{equation}\label{E:I}
m_{\boldsymbol{\nu},q}\coloneqq \frac{q}{\int_0^{1/\mathrm{k}^*} \Pi(t) \dd t}.
\end{equation}
In particular, as $\Pi(t) < 1$ for all $0< t \leq 1/\mathrm{k}^*$, we remark that $m_{\boldsymbol{\nu},q} >q\mathrm{k}^*$.

\begin{theorem}\label{T1} The Malthusian exponent of the reinforced Galton-Watson process $Z$ equals $\log m_{\boldsymbol{\nu},q}$, in the sense that
$$\lim_{n\to \infty} \frac{1}{n} \log \E (Z(n)) = \log m_{\boldsymbol{\nu},q}.$$
Much more precisely,  we have
$$ m_{\boldsymbol{\nu},q}^{-n}\E (Z(n)) =   \frac{ \nu(\mathrm{k}^*)}{q+
 \nu(\mathrm{k}^*)(1-q)} +O(n^{-q/(q+(1-q)\nu(\mathrm{k}^*))}).$$
\end{theorem}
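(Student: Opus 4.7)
The plan is to encode the expected generation sizes into a multivariate generating function satisfying a transport-type PDE, solve it by characteristics, and extract coefficient asymptotics via Flajolet--Odlyzko singularity analysis. To do so, I sort the expected population at generation $n$ according to the composition $\vec N = (N_0,\dots,N_{\mathrm{k}^*})$ (with $\sum_k N_k = n$) recording how many of its $n$ ancestors had each offspring number. If $a_n(\vec N)$ denotes the expected number of such individuals, the reinforcement rule yields the parent-to-child recursion
\[
a_n(\vec N) = \sum_{k : N_k\geq 1} k\left(q\frac{N_k-1}{n-1}+(1-q)\nu(k)\right)a_{n-1}(\vec N-e_k), \qquad n\geq 2,
\]
and setting $F_n(\vec x) = \sum_{\vec N} a_n(\vec N)\prod_k x_k^{N_k}$ and $\Phi(t,\vec x) = \sum_{n\geq 0}F_n(\vec x)\,t^n$ (so that $\E(Z(n)) = [t^n]\Phi(t,\vec 1)$) translates this into the first-order linear PDE
\[
\bigl(1-(1-q)\tilde g(\vec x)\,t\bigr)\partial_t\Phi - q\sum_k k x_k^2 \partial_{x_k}\Phi = (\Phi-1)/t,\qquad \Phi(0,\vec x)=1,
\]
where $\tilde g(\vec x)=\sum_k k\nu(k)x_k$: this is the transport equation announced in the abstract.

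I then solve by the method of characteristics. The equations $dx_k/ds = -qk x_k^2$ decouple into $1/x_k(s)-1/x_k(0)=qks$, so I can aim a characteristic at $(T,\vec 1)$, reaching it at parameter $s=s^*$, by starting at $\vec x(0)=(1/(1-qks^*))_k$; then $x_k(s) = 1/(1+qk(s-s^*))$ along the curve. The $t$-equation $dt/ds = 1-(1-q)\tilde g(\vec x(s))\,t$ is linear with integrating factor $\prod_k(1+qk(s-s^*))^{\nu(k)(1-q)/q}$, and writing the transport equation $d\Phi/ds=(\Phi-1)/t$ in the form $\Phi-1 = \alpha(s)\,t(s)$ reduces the new unknown $\alpha$ to the separable ODE $d\alpha/ds=(1-q)\tilde g(\vec x(s))\alpha$ with initial value $\alpha(0)=\tilde g(\vec x(0))$ read off from $\Phi=1+\tilde g t+O(t^2)$ near $t=0$. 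Integrating and setting $u=qs^*\in[0,1/\mathrm{k}^*)$ gives the explicit parametric representation
\[
T(u)=\frac{1}{q}\int_0^u\Pi(\tau)\,\dd\tau,\qquad \Phi(T(u),\vec 1)=1+\frac{T(u)}{\Pi(u)}\sum_k\frac{k\nu(k)}{1-ku},
\]
so in particular $T(u)\uparrow 1/m_{\boldsymbol{\nu},q}$ as $u\uparrow 1/\mathrm{k}^*$, which pins the radius of convergence.

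Finally, I factor the singularity. Setting $\beta = \nu(\mathrm{k}^*)(1-q)/q$, one has $\Pi(u) = (\mathrm{k}^*)^\beta(1/\mathrm{k}^*-u)^\beta\pi_1(u)$ with $\pi_1$ analytic and non-vanishing at $1/\mathrm{k}^*$, and $\sum_k k\nu(k)/(1-ku) = \nu(\mathrm{k}^*)/(1/\mathrm{k}^*-u) + h_1(u)$ with $h_1$ analytic there. A Taylor expansion at $u=1/\mathrm{k}^*$ gives $1-m_{\boldsymbol{\nu},q}T \sim c(1/\mathrm{k}^*-u)^{\beta+1}$ for an explicit $c>0$, and substituting into the formula for $\Phi(T,\vec 1)$ yields
\[
\Phi(T,\vec 1) = 1 + \frac{A}{1-m_{\boldsymbol{\nu},q}T} + O\!\bigl((1-m_{\boldsymbol{\nu},q}T)^{-\beta/(\beta+1)}\bigr),\qquad A = \frac{\nu(\mathrm{k}^*)}{q+(1-q)\nu(\mathrm{k}^*)},
\]
as $T\uparrow 1/m_{\boldsymbol{\nu},q}$. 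Since $1/(\beta+1)=q/(q+(1-q)\nu(\mathrm{k}^*))$, the standard transfer theorem $[T^n](1-mT)^{-\gamma}\sim m^n n^{\gamma-1}/\Gamma(\gamma)$ yields the announced asymptotic for $m_{\boldsymbol{\nu},q}^{-n}\E(Z(n))$. The main obstacle is analytic: to apply the transfer theorem I must continue $(T(u),\Phi(T(u),\vec 1))$ to complex $u$ near $1/\mathrm{k}^*$, verify that $u\mapsto T(u)$ is biholomorphic onto a Flajolet--Odlyzko ``camembert'' domain based at $T=1/m_{\boldsymbol{\nu},q}$, and check the required polynomial growth of the error on the boundary of that domain; once this is secured, the transfer theorems apply and the remaining Taylor bookkeeping at $u=1/\mathrm{k}^*$ is routine.
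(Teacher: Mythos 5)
Your proposal follows the same overall strategy as the paper — derive a transport-type PDE for a generating function encoding $\E(Z(n))$, solve it by characteristics to obtain a parametric representation involving the function $\Pi$ from \eqref{E:pi}, then apply Flajolet--Odlyzko singularity analysis — but reaches the PDE by a genuinely different route. The paper first converts $\E(Z(n))$ into factorial moment generating functions of a continuous-time multitype Yule process (Lemma~\ref{L1}), obtains a system of ODEs for those (Lemma~\ref{L3}), and then collapses the system to a one-dimensional transport equation (Lemma~\ref{L4}). You instead work entirely in discrete time: you stratify the expected population by the composition vector $\vec N$ of ancestral offspring numbers, write the one-step recursion directly, and read off the transport PDE for $\Phi(t,\vec x)$ in one step. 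This is an attractive streamlining — it bypasses the many-to-one/Yule embedding entirely and makes the $\Pi$ function appear transparently as the integrating factor along the decoupled characteristics $1/x_k(s)-1/x_k(0)=qks$. The parametric formula $T(u)=\frac{1}{q}\int_0^u\Pi$ and the singular expansion you extract (leading coefficient $\nu(\mathrm{k}^*)/(q+(1-q)\nu(\mathrm{k}^*))$, error exponent $1/(\beta+1)=q/(q+(1-q)\nu(\mathrm{k}^*))$) agree with the paper's, and I checked that the recursion, the PDE, and the Taylor bookkeeping near $u=1/\mathrm{k}^*$ are all correct.

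The genuine gap is the one you yourself flag in the last sentence: you have a parametric representation $(T(u),\Phi(T(u),\vec 1))$ valid for real $u\in[0,1/\mathrm{k}^*)$, i.e.\ real $T\in[0,1/m_{\boldsymbol{\nu},q})$, but the transfer theorem requires $\Phi(T,\vec 1)$ to extend analytically to a whole $\Delta$-domain $\Delta(R,\theta)$ with $R>1/m_{\boldsymbol{\nu},q}$, together with the stated $O$-bound uniformly on that domain. This is the hardest part of the paper's argument and occupies all of Section~5: one needs \emph{both} a continuation across the wedge near the singular point (handled by the local factorization $q-I_{\boldsymbol{a}}(z)=g(z)^\beta$ and the conformal map $g$ in Lemma~\ref{L5}(ii)) \emph{and} a global continuation to the rest of the disk of radius $R$, which the paper obtains by showing that $I_{\boldsymbol{a}}$ is a biholomorphism from a domain $\Theta'$ onto the half-plane $\{\Re w<q\}$ via the flow $z'(t)=i/\Pi_{\boldsymbol{a}}(z(t))$ (Lemma~\ref{L:bij}). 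Your phrasing "continue to complex $u$ near $1/\mathrm{k}^*$'' understates this: a camembert domain is a neighborhood of the entire closed disk minus a slit at the singularity, not merely a neighborhood of $1/\mathrm{k}^*$, so the continuation must also be carried out well away from the singular point. Until that continuation is actually produced (it is nontrivial — note the paper needs a dedicated flow-based argument for it, not just the inverse function theorem), the transfer theorem cannot be invoked and the proof is incomplete. Everything else in your proposal is sound.
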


Plainly, if $Z(n)=0$ for some $n\geq 1$, then also $Z(n')=0$ for all $n'\geq n$ and one says that the population becomes extinct eventually. Applying the Markov inequality to $Z(n)$, we remark that $m_{\boldsymbol{\nu},q} < 1$ implies that  almost surely, the reinforced Galton-Watson process becomes extinct eventually. Therefore the condition $m_{\boldsymbol{\nu},q} \geq 1$ is necessary for the survival of the reinforced Galton-Watson process with strictly positive probability. We conjecture however that this condition should not be sufficient; we believe that there should exist reproduction laws $\boldsymbol{\nu}$ and memory parameters $q$ such that, almost surely, the reinforced Galton-Watson process becomes extinct eventually, even though $\E(Z(n)) \to \infty$ as $n \to \infty$.

\subsection{Techniques and plan}
Although reinforcement invalidates fundamental properties of Galton-Watson processes, our strategy for establishing Theorem~\ref{T1} nonetheless uses some classical tools from the theory of branching processes, and notably analytic methods. These techniques can be pushed further and yield an asymptotic expansion of $\E (Z(n))$ with an arbitrary order. It is  for the sake of readability only that we just consider the first order in this work.

In Section 2, we start by relating  the expected population size for the reinforced Galton-Watson process to the factorial moment generating functions of a multitype Yule process; see Lemma~\ref{L1}. In short, this combines the basic many-to-one formula
with the classical embedding of urn schemes into continuous time multitype branching processes (see, for instance, \cite[Section V.9]{AN}).

In Section 3, we observe that those factorial moment generating functions verify a multidimensional ODE; see Lemma~\ref{L3} there. This is similar to the use of analytic methods  that were introduced in the pioneering works of Flajolet \textit{et al.} \cite{Flajoes,Flajoau} for determining distributions in certain P\'olya type urn processes.

In Section 4, we shall perform several transformations to reduce the multidimensional ODE obtained above to a one-dimensional inhomogeneous transport equation with growth. We then
 analyze  characteristics curves and in particular, we determine the parameters for which the factorial moment generating functions explode; see Proposition~\ref{P1}. This points at a critical case, which corresponds to the situation when the quantity $m_{\boldsymbol{\nu},q}$ in \eqref{E:I} equals $1$.

Section 5 is then devoted to the study of the asymptotic behavior of the factorial moment generating functions in critical case. Our main result there, Proposition~\ref{P2},  shows that this problem fits the framework of  singular analysis of generating functions; see \cite{FlajOdl,FlajSed}.

Theorem~\ref{T1} is established in Section 6; it suffices to combine the elements developed in the preceding sections. Last, some comments and further results are presented in Section 7.
Notably, we discuss a sharper version of Theorem~\ref{T1} under the law $\P_\ell$, that is given that the ancestor has $\ell$ children.

\section{A multitype Yule process}
We introduce a standard Yule process $Y=(Y(t))_{t\geq 0}$, that is $Y$ is a pure birth process started from  $Y(0)=1$ and such that the rate of jump from $k$ to $k+1$ equals $k$ for all $k\geq 1$. We  think of $Y$ as the process recording the size of a population evolving in continuous time, such that individuals are immortal and each individual gives birth to a child at unit rate and independently of the other individuals. The interpretation in terms of population models, is that at any birth event, the parent  that begets is chosen uniformly at random
in the population immediately before the birth event.
 We enumerate individuals according to  the increasing order of their birth times and then record the genealogy as a combinatorial  tree $\T$ on $\N=\{1, \ldots \}$. The  distribution of  $\T$  is that of an infinite random recursive tree, that is such that for every fixed $n\in \N$, its restriction to
 $\{1, \ldots, n\}$ has a the uniform distribution on the set of trees rooted at $1$ and such that the sequence of vertices along any branch from the root is increasing. Clearly enough, the  tree $\T$ and the Yule process $Y$ are independent (the latter only records the size of the population as time passes and ignores the genealogy).

Next, we randomly and recursively  assign types to the individuals in the population, that is to the vertices of $\T$, where the space of types is $\mathrm{Supp}(\boldsymbol{\nu})$, the support of the reproduction law $\boldsymbol{\nu}$.  For every $n\geq 2$, conditionally on the genealogical tree $\T$ and the types already assigned to the $n-1$ first individuals, if the type of  the parent of $n$ is $j$, then the probability that the type of $n$ is $k$ equals
$$q\ind{k=j} + (1-q)\nu(k).$$
We should think of the assignation of types as a random process on $\T$, such that for each individual $i\geq 2$ (the ancestor $1$ is excluded), with probability $q$ the type of $i$ merely repeats that of its parent, and with complementary probability $1-q$, it is chosen at random according to the reproduction law $\boldsymbol{\nu}$.
We stress that the assignation of types is performed independently of the Yule process $Y$.
If originally, the type $\ell$ is assigned to the ancestor $1$, then we write $\PP_{\ell}$ for the distribution of the population process with types, and $\PP_{\boldsymbol{\nu}}$, or simply $\PP$, when the type of $1$ is random with law $\boldsymbol{\nu}$.

For every $t\geq 0$, let us write $Y_j(t)$ for the number of individuals of type $j$ in the population at time $t$, and $\Y(t)=(Y_j(t))$ for the sequence indexed by $j\in \mathrm{Supp}(\boldsymbol{\nu})$.
Then the process $\Y=(\Y(t))_{t\geq 0}$ is  a multitype Yule process such that every individual begets a child bearing the same type with rate $q$, and a child with type chosen independently according to $\boldsymbol{\nu}$ with rate $1-q$.

We point at the similarity, but also the difference, with the reinforcement dynamic that motivates this work.
In both cases, the new born individual repeats with probability $q$ the type, or the offspring number, of a forebear, and is otherwise assigned an independent type, or offspring number, distributed according to the  fixed law $\boldsymbol{\nu}$. However, the forebear is merely the parent of the new individual in the multitype Yule setting, whereas it is  picked uniformly
at random on the ancestral lineage in the reinforced Galton-Watson setting.

We can now state the connexion between the reinforced Galton-Watson process and the multitype Yule process which is the first step of our analysis.
In this direction, recall that  $\P_\ell$ stands for the conditional distribution of the reinforced Galton-Watson process given $Z(1)=\ell$.
\begin{lemma}\label{L1} For every $t\geq 0$, $c>0$, and $\ell\in \mathrm{Supp}(\boldsymbol{\nu})$, there is the identity
$$\EE_{\ell}\left( \prod (cj)^{Y_j(t)}\right) = \e^{-t}\sum_{n=1}^{\infty} (1-\e^{-t})^{n-1} c^n\E_{\ell}(Z(n)),$$
where in the left-hand side, the product implicitly runs over $j\in \mathrm{Supp}(\boldsymbol{\nu})$ and we use the convention  $0^0=1$.
\end{lemma}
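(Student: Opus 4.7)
The plan is to reduce the identity to a conditional statement given the total size $Y(t)$ of the Yule population, and then bridge the two sides by a many-to-one formula for the reinforced Galton-Watson process. First, the factorisation $\prod_j (cj)^{Y_j(t)} = c^{Y(t)} \prod_j j^{Y_j(t)}$ together with the independence of the standard Yule process $Y$ (which only records the population size) from the typed tree allows me to condition on $Y(t)$. Since $\P(Y(t)=n) = \e^{-t}(1-\e^{-t})^{n-1}$ for $n \geq 1$, the identity reduces to
\[
\EE_\ell\!\left[\prod_j j^{Y_j(t)} \,\middle|\, Y(t) = n\right] = \E_\ell(Z(n))
\]
for every $n \geq 1$. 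Conditionally on $\{Y(t) = n\}$, the restriction of $\T$ to $\{1, \ldots, n\}$ is a uniform random recursive tree (URRT), so the conditional left-hand side equals $\EE_\ell\bigl[\prod_{i=1}^n \tau_i\bigr]$, where $\tau_i$ denotes the type of vertex $i$ and the convention $0^0 = 1$ takes care of the case $\nu(0)>0$.

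Second, I would establish the many-to-one formula
\[
\E_\ell(Z(n)) = \E^L\!\left[\hat\tau_0 \hat\tau_1 \cdots \hat\tau_{n-1}\right],
\]
where, under the law $\P^L$, the sequence $(\hat\tau_k)_{k \geq 0}$ is the \emph{single-lineage} reinforced process: $\hat\tau_0 = \ell$, and conditionally on $(\hat\tau_0, \ldots, \hat\tau_{k-1})$, the next value $\hat\tau_k$ equals $\hat\tau_{U_k}$ (for $U_k$ uniform on $\{0, \ldots, k-1\}$) with probability $q$ and is an independent sample from $\boldsymbol{\nu}$ with probability $1-q$. To this end, I realise the reinforced Galton-Watson process on the Ulam tree: a word $(v_1, \ldots, v_n) \in \N^n$ indexes a living generation-$n$ individual if and only if $v_k \leq \hat\tau_{k-1}$ for every $k \in \{1, \ldots, n\}$, where $\hat\tau_{k-1}$ is the offspring count of the ancestor at generation $k-1$ along this word. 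Since the reinforcement rule only reads types on the ancestral lineage, the joint law of $(\hat\tau_0, \ldots, \hat\tau_{n-1})$ along any fixed word is exactly $\P^L$. Writing $Z(n) = \sum_{(v_1, \ldots, v_n) \in \N^n} \prod_{k=1}^n \mathbf{1}_{\{v_k \leq \hat\tau_{k-1}\}}$, interchanging sum and expectation by nonnegativity, and using $\sum_{v \geq 1} \mathbf{1}_{\{v \leq m\}} = m$, delivers the claimed identity.

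Finally, I would observe that the URRT-typed sequence $(\tau_1, \ldots, \tau_n)$ and the single-lineage sequence $(\hat\tau_0, \ldots, \hat\tau_{n-1})$ have the same joint distribution under the obvious index shift $k \leftrightarrow k-1$. Indeed, a URRT on $\{1, \ldots, n\}$ can be built sequentially by attaching vertex $k$ to a parent drawn uniformly in $\{1, \ldots, k-1\}$, and in both constructions the new type then either copies that of the uniformly picked predecessor with probability $q$ or is drawn freshly from $\boldsymbol{\nu}$ with probability $1-q$; the two rules coincide under the shift. This yields $\EE_\ell[\prod_{i=1}^n \tau_i] = \E^L[\prod_{k=0}^{n-1} \hat\tau_k] = \E_\ell(Z(n))$, which combined with the geometric law of $Y(t)$ finishes the proof. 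The main obstacle I expect is the many-to-one step: despite the heavy correlations between distinct lineages created by the reinforcement, one must recognise that the expected generation-$n$ population factorises into a product of successive offspring counts along a single spine, an argument that hinges on the fact that the reinforcement rule depends only on ancestral types.
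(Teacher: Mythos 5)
Your proof is correct and takes essentially the same route as the paper's: a many-to-one (spine) formula reduces $\E_\ell(Z(n))$ to the expected product of offspring counts along a single lineage, this is matched to the product of types along the first $n$ vertices of the recursive tree underlying the Yule population, and the geometric law of $Y(t)$ closes the argument. The only real difference is in how the many-to-one step is implemented: the paper follows a uniformly random descent lineage $(\zeta_i)$ and then has to reconcile conditional laws on $\{\zeta_n\neq 0\}$ with those on $\{Y_0(\beta_n)=0\}$, whereas your Ulam-tree sum annihilates dead lineages automatically through the indicators, so the identification of the single-lineage sequence with the URRT type sequence (via the fact that the parent of vertex $k$ is uniform on $\{1,\dots,k-1\}$) becomes an unconditional distributional match, with no extinction bookkeeping needed.
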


\begin{proof}
Consider any deterministic genealogical tree $\mathcal{T}$ and construct iteratively a sequence of random variables $(\zeta_n)$ recording the sibling sizes along a randomly chosen descent lineage. Specifically, we first set $\zeta_1$ as the number of children of the root of $\mathcal{T}$. If $\zeta_1 = 0$, then we also set $\zeta_n = 0$ for all $n \geq 2$. Otherwise, we select an individual chosen uniformly at random  in the first generation, and write $\zeta_2$ for the number of its children at the second generation, and iterate in an obvious way. That is, if  $\zeta_2=0$, then  $\zeta_n=0$ for all $n\geq 3$, and else, we select a child uniformly at random in the sibling of size $\zeta_2$, ... We then observe that for any $n \in \N$, writing $\mathcal{Z}(n)$ for the number of individuals at the $n$th generation of $\mathcal{T}$, we have
\[
  \mathcal{Z}(n) = \E(\zeta_1\times \cdots \times \zeta_n).
\]
Indeed, observe that the probability for a given individual  in the $n$th generation  to be chosen is equal to the inverse of the product of the sibling sizes along its ancestral path.

In particular, in the case of the reinforced Galton-Watson process, we get
\begin{equation}\label{E:many21}\E_\ell(Z(n)) = \E_\ell(\zeta_1\times \cdots \times \zeta_n).
\end{equation}
The reinforcement dynamics entail that, provided that $\zeta_n\geq 1$, the conditional probability that $\zeta_{n+1}=k$ given the first $n$ values $(\zeta_i)_{1\leq i \leq n}$ is given by
$$q n^{-1} \sum_{i=1}^n \ind{\zeta_i=k} + (1-q)\nu(k).$$
Comparing with the evolution of the multitype Yule process $\Y$, we realize that for any $n\geq 1$, the conditional distribution of $(\zeta_i)_{1\leq i \leq n+1}$ given $\zeta_1=\ell$ and $\zeta_n\neq 0$ is the same as the conditional distribution under $\PP_\ell$ of the sequence of the types of the first $n+1$ individuals of the multitype Yule process  given that no individuals amongst the first $n$ have type $0$.
Moreover, we have also
$$\P_\ell(\zeta_n=0)=\PP_{\ell}(Y_0(\beta_n)\geq 1),$$
where $\beta_n=\inf\{t\geq 0: Y(t)=n\}$ stands for the birth time of the $n$-th individual in the multitype Yule process.
As a consequence, we have
$$\E_\ell(\zeta_1\times \cdots \times \zeta_n) = \EE_\ell\left( \prod j^{Y_j(\beta_n)}\right),$$
where, as usual, the product in the right-hand side runs over $j\in \mathrm{Supp}(\boldsymbol{\nu})$. On the other hand, recalling that the monotype Yule process $Y= \sum Y_j$ is independent of the assignation of types, we get that for any $c$ and $t>0$, there is the identity
$$c^n\EE_\ell\left( \prod j^{Y_j(\beta_n)}\right) = \EE_\ell\left( \prod(c j)^{Y_j(t)}\mid Y(t)=n\right),$$

Putting the pieces together, we arrive at
$$\EE_\ell\left( \prod (cj)^{Y_j(t)}\mid Y(t)=n\right)= c^n\E_\ell(Z(n)).$$
We can now conclude the proof by recalling the basic fact that $Y(t)$ has the geometric distribution with parameter $\e^{-t}$.
\end{proof}

We conclude this section by pointing at monotonicity property in the memory parameter $q$, which may be rather  intuitive even though its proof is not completely obvious. We also stress that this property is valid
under $\P=\P_{\boldsymbol{\nu}}$, but might fail under some $\P_{\ell}$.
\begin{lemma} \label{L-1} For every fixed $n\geq 1$, the average population size $\E(Z(n))$ is a monotone increasing function of the memory parameter $q$.
\end{lemma}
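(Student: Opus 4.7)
The plan is to combine the many-to-one identity of Lemma~\ref{L1} with an enrichment of the sequence $(\zeta_i)$ by an auxiliary partition whose law is monotone in~$q$ under a natural coupling.

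Concretely, I would realise the Simon-type sequence in \eqref{E:many21} under $\P=\P_{\boldsymbol{\nu}}$ from three independent families: i.i.d.\ samples $K_j\sim\boldsymbol{\nu}$ for $j\geq 1$, Bernoulli$(q)$ variables $U_i$ for $i\geq 2$, and uniforms $W_i\in\{1,\ldots,i-1\}$ for $i\geq 2$, by setting $\zeta_1=K_1$ and, for $i\geq 2$, $\zeta_i=\zeta_{W_i}$ if $U_i=1$ and $\zeta_i=K_i$ otherwise; one easily checks that this matches the transition used in the proof of Lemma~\ref{L1}. Define a source map by $\mathrm{src}(1)=1$, $\mathrm{src}(i)=\mathrm{src}(W_i)$ if $U_i=1$, and $\mathrm{src}(i)=i$ otherwise; the equivalence classes of $\mathrm{src}$ on $\{1,\ldots,n\}$ form a random partition $\tilde\pi_n$ that is measurable with respect to the $(U_i,W_i)$'s alone. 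The key point is that under $\P_{\boldsymbol{\nu}}$ the root value $K_1$ is itself a fresh $\boldsymbol{\nu}$-draw, so conditionally on $\tilde\pi_n$ every block $B$ carries the common value $K_{\min B}$ and the family $(K_{\min B})_{B\in\tilde\pi_n}$ is i.i.d.\ $\boldsymbol{\nu}$. Writing $m_k=\sum_j j^k\nu(j)$, this gives
\[
\E_{\boldsymbol{\nu}}(\zeta_1\cdots\zeta_n)=\E\left[\prod_{B\in\tilde\pi_n} m_{|B|}\right].
\]

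Two ingredients then close the argument. First, Chebyshev's sum inequality applied to the positively correlated non-decreasing functions $x\mapsto x^k$ and $x\mapsto x^l$ on $\mathrm{Supp}(\boldsymbol{\nu})$ gives $m_{k+l}\geq m_k m_l$, so the functional $\pi\mapsto\prod_{B\in\pi} m_{|B|}$ is non-decreasing in the coarsening partial order on partitions of $\{1,\ldots,n\}$. Second, I would couple the constructions for $q\leq q'$ by using common uniforms $U_i^{\ast}\in[0,1]$ (with $U_i^{(r)}=\ind{U_i^{\ast}\leq r}$ for $r\in\{q,q'\}$) and a common $W_i$ at every step. A short induction on $n$ then shows that $\tilde\pi_n^{(q')}$ coarsens $\tilde\pi_n^{(q)}$ almost surely: at step $n+1$ either both processes open a singleton, or both copy using the same $W_{n+1}$ (their blocks containing $W_{n+1}$ being nested by the induction hypothesis), or only the $(q')$-process copies while the $(q)$-process opens a singleton; in each scenario the coarsening relation is preserved. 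Taking expectation of the resulting a.s.\ inequality $\prod_{B}m_{|B(\tilde\pi_n^{(q')})|}\geq\prod_{B}m_{|B(\tilde\pi_n^{(q)})|}$ yields $\E_{q'}(Z(n))\geq\E_q(Z(n))$.

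The main subtlety lies in the coupling induction and the verification that the source partition is the right enrichment. The same analysis also clarifies why monotonicity can fail under $\P_\ell$: the block $B_1$ containing the root then carries the deterministic value $\ell$ rather than a fresh $\boldsymbol{\nu}$-draw, so the conditional expectation becomes $\ell^{|B_1|}\prod_{B\neq B_1} m_{|B|}$, and absorbing some other block $B$ into $B_1$ trades $\ell^{|B_1|}m_{|B|}$ for $\ell^{|B_1|+|B|}$, a net loss as soon as $\ell^{|B|}<m_{|B|}$ (for instance if $\ell=1$ while some $m_k>1$).
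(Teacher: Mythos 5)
Your proposal is correct and is essentially the paper's own argument in different clothing: your ``source partition'' $\tilde\pi_n$ built from $(U_i,W_i)$ is exactly the label structure of the paper's P\'olya urn with random replacement, the inequality $m_{k+l}\geq m_k m_l$ (you via Chebyshev's sum inequality, the paper via Jensen/power means) plays the same role, and your explicit coarsening coupling is the paper's ``non-injective relabelling of the balls'' spelled out by induction. Your closing remark explaining why the argument breaks under $\P_\ell$ (the root block carries the fixed value $\ell$ rather than a fresh $\boldsymbol\nu$-draw) is a nice addition that the paper only asserts without justification.
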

\begin{proof} We will use a coupling argument for a simple variation of P\'olya  urn model with random replacement and balls labelled  $\circled{\small{1}}, \circled{\small{2}}, \ldots$. Imagine an urn which contains initially just one ball $\circled{\small{1}}$, and let $(\epsilon_n)_{n\geq 1}$ a sequence of i.i.d. Bernoulli variables with parameter $q$. At each step $n=1, \ldots$, we draw a ball uniformly at random from the urn and return it together with, either a copy of the sampled ball   if $\epsilon_n=1$, or  a new ball bearing the first label which was not already present in the urn if $\epsilon_n=0$.
For any $k\geq 1$, denote by $N_k(n)$ the number of balls $\circled{\small{$k$}}$ in the urn after $n-1$ steps, i.e. when the urn contains in total $n$ balls.

Then consider also  a sequence $(\xi_k)_{k\geq 1}$ of i.i.d. samples from $\boldsymbol{\nu}$, independent of the urn process. We get from the many-to-one formula \eqref{E:many21} (recall the convention $0^0=1$) that
$$\E(Z(n)) = \E\left( \prod_{k\geq 1} \xi_k^{N_k(n)}\right) = \E\left( \prod_{k\geq 1} \mathrm{m}_{\boldsymbol{\nu}}(N_k(n))\right),$$
where in the right-hand side, we wrote
$$\mathrm{m}_{\boldsymbol{\nu}}(\ell) \coloneqq \E(\xi^\ell)= \sum_j j^\ell \nu(j), \qquad \ell \geq 0$$
 for the $\ell$-th moment of the reproduction law.

 Now let $q'>q$. By an elementary coupling, we get  a sequence  $(\epsilon'_n)_{n\geq 1}$ of i.i.d. Bernoulli variables with parameter $q'$ such that  $\epsilon'_n\geq \epsilon_n$ for all $n\geq 1$.
We can use these to couple the urn processes with respective memory parameters $q$ and $q'$, so that the second one is obtained from the first by some (random) non-injective relabelling of the balls. Since we know from Jensen's inequality that
 $$ \mathrm{m}_{\boldsymbol{\nu}}(\ell) \times \mathrm{m}_{\boldsymbol{\nu}}(\ell')\leq \mathrm{m}_{\boldsymbol{\nu}}(\ell+\ell'),$$
 we deduce that
 $$ \prod_{k\geq 1} \mathrm{m}_{\boldsymbol{\nu}}(N_k(n)) \leq  \prod_{k\geq 1} \mathrm{m}_{\boldsymbol{\nu}}(N'_k(n)),$$
 where in the right-hand side, $N'_k(n)$ stands for the number of balls $\circled{\small{$k$}}$ in the urn with memory parameter $q'$ after $n-1$ steps.
 Taking expectations completes the proof.
\end{proof}

\section{Factorial moment generating functions of the multitype Yule process}
Lemma~\ref{L1} incites us to investigate the factorial moment generating functions ${\boldsymbol{M}}=(M_{\ell})$ of the multitype Yule process $\Y$, where
$$M_{\ell}(\boldsymbol{a},t)\coloneqq \EE_\ell \left( \prod {a}_j^{Y_j(t)}\right) , \qquad t\geq 0 \text{ and } \boldsymbol{a}=({a}_j)\in \R_+^{\mathrm{Supp}(\boldsymbol{\nu})}.
$$
Clearly, $M_{\ell}(\boldsymbol{a},t)\equiv 0$ if ${a}_{\ell}=0$, and otherwise is a strictly positive quantity which may be infinite when $\| \boldsymbol{a}\|_{\infty}>1$
and $t$ is sufficiently large, with the notation
$\|\cdot \|_{\infty}$ for the maximum norm on $\R^{\mathrm{Supp}(\boldsymbol{\nu})}$. We first make the following simple observation in this direction. From now on, the trivial case when  $\boldsymbol{a}=0$ will be systematically ruled out.
\begin{lemma} \label{L2} For every  $\boldsymbol{a}\in \R_+^{\mathrm{Supp}(\boldsymbol{\nu})}$,  there exists  $\rho(\boldsymbol{a})\in (0,\infty]$
such that for every $\ell\in \mathrm{Supp}(\boldsymbol{\nu})$ with ${a}_{\ell}>0$,
$$M_{\ell}(\boldsymbol{a},t)
\left\{
\begin{matrix}
<\infty &
 \text{ for all }0\leq t <\rho(\boldsymbol{a}) ,\\
=\infty & \text{ for all } t>\rho(\boldsymbol{a}).
\end{matrix} \right. $$
Moreover,
$t\mapsto M_{\ell}(\boldsymbol{a},-\log(1-t))$ is  an entire function with radius of convergence no less than $1-\e^{-\rho(\boldsymbol{a})}$.

\end{lemma}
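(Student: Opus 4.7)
The plan is to reduce everything to a one-variable real-positive power series through the change of variable $u=1-\e^{-t}$, exploiting the independence of the type sequence $(T_i)_{i\geq 1}$ from the total-size process $Y=\sum_j Y_j$ emphasised in the previous section. Conditioning on $Y(t)=n$, which is geometric with parameter $\e^{-t}$, and writing $T_i$ for the type of the $i$-th individual, I would first record
\[
M_\ell(\boldsymbol{a},t)=\sum_{n\geq 1}\e^{-t}(1-\e^{-t})^{n-1}c_n^{(\ell)}(\boldsymbol{a}),\qquad c_n^{(\ell)}(\boldsymbol{a})\coloneqq\EE_\ell\Big[\prod_{i=1}^n a_{T_i}\Big]\geq 0,
\]
so that $M_\ell(\boldsymbol{a},-\log(1-u))=(1-u)F_\ell(u)$ with $F_\ell(u)\coloneqq\sum_{n\geq 1}u^{n-1}c_n^{(\ell)}(\boldsymbol{a})$ a power series with non-negative coefficients. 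Its finiteness set on $[0,\infty)$ is automatically an initial interval $[0,r_\ell]$ or $[0,r_\ell)$, and $r_\ell$ is its radius of convergence; the announced dichotomy for $M_\ell(\boldsymbol{a},t)$ therefore reduces to showing that $r_\ell$ does not depend on $\ell\in\mathrm{Supp}(\boldsymbol{\nu})$ with $a_\ell>0$, with the common value $r$ tied to $\rho(\boldsymbol{a})$ by $r=1-\e^{-\rho(\boldsymbol{a})}$. The analyticity assertion will then be immediate, since $(1-u)F_\ell(u)$ is the product of an entire factor and a power series with radius of convergence $r$.

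The positivity $\rho_\ell(\boldsymbol{a})>0$ is straightforward from the pointwise bound $\prod_j a_j^{Y_j(t)}\leq \max(1,\|\boldsymbol{a}\|_\infty)^{Y(t)}$: when $\|\boldsymbol{a}\|_\infty\leq 1$ this already gives $\rho_\ell(\boldsymbol{a})=\infty$, and when $A\coloneqq\|\boldsymbol{a}\|_\infty>1$ an elementary computation with the moment generating function of the geometric variable $Y(t)$ yields $\rho_\ell(\boldsymbol{a})\geq\log(A/(A-1))>0$.

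The main obstacle is the type-independence of $\rho_\ell(\boldsymbol{a})$, and my plan is to split the multitype Yule process at the first birth time $\tau\sim\mathrm{Exp}(1)$ of the ancestor: for $\tau=s<t$, the population at time $\tau$ consists of the root (still of type $\ell$) together with a fresh child whose type $K$ is distributed as $q\delta_\ell+(1-q)\boldsymbol{\nu}$, after which the two descendant subtrees evolve as independent multitype Yule processes started from types $\ell$ and $K$ respectively. Taking expectations yields the Volterra-type identity
\[
M_\ell(\boldsymbol{a},t)=a_\ell\e^{-t}+\int_0^t \e^{-s}\sum_k\bigl(q\ind{k=\ell}+(1-q)\nu(k)\bigr)M_\ell(\boldsymbol{a},t-s)M_k(\boldsymbol{a},t-s)\,\dd s.
\]
For any $k\in\mathrm{Supp}(\boldsymbol{\nu})$ with $a_k>0$, finiteness of the left-hand side combined with the trivial lower bound $M_\ell(\boldsymbol{a},u)\geq a_\ell\e^{-u}>0$ on $[0,t]$ forces $\int_0^t M_k(\boldsymbol{a},u)\,\dd u<\infty$, so $M_k(\boldsymbol{a},u)<\infty$ on a full-measure subset of $(0,t)$; the monotonicity of $F_k$ in $u$ then promotes this to $M_k(\boldsymbol{a},u)<\infty$ for every $u<t$, whence $\rho_k(\boldsymbol{a})\geq\rho_\ell(\boldsymbol{a})$. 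A symmetric exchange of $\ell$ and $k$ gives equality, and the first paragraph takes care of the remaining assertions.
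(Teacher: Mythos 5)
Your proof is correct. The first part, writing $M_\ell(\boldsymbol{a},-\log(1-u)) = (1-u)F_\ell(u)$ with $F_\ell$ a power series with non-negative coefficients, is exactly the paper's opening move, and it correctly reduces the dichotomy and the analyticity statement to showing that the radius of convergence $r_\ell$ is the same for all relevant $\ell$. Where you genuinely diverge from the paper is in proving this type-independence. The paper argues ``forward'' from explosion: for $t' > t > \rho_\ell(\boldsymbol{a})$ it shows, under $\PP_j$, that with positive probability the population at time $t'-t$ consists of exactly two individuals of types $j$ and $\ell$, and then the branching property gives $M_j(\boldsymbol{a},t') \geq p(t'-t)\,M_\ell(\boldsymbol{a},t)\,M_j(\boldsymbol{a},t) = \infty$, forcing $\rho_j \leq t'$. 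You argue ``backward'' from finiteness: the Volterra identity obtained by conditioning on the first birth time (the same identity the paper later derives in the proof of Lemma~\ref{L3}), together with the uniform lower bound $M_\ell(\boldsymbol{a},u) \geq a_\ell \e^{-u} > 0$, shows that $M_\ell(\boldsymbol{a},t) < \infty$ forces $\int_0^t M_k(\boldsymbol{a},u)\,\dd u < \infty$, and monotonicity of the power series $F_k$ upgrades this to $M_k(\boldsymbol{a},u) < \infty$ for all $u < t$, giving $\rho_k \geq \rho_\ell$. Both are applications of the branching property, but yours is a bit more elementary — it avoids estimating a coexistence probability — and has the pleasant feature of anticipating the integral equation that drives the rest of the paper's analysis.
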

\begin{remark}
For instance, if ${a}_j=c>1$ for all $j$,
then the factorial moment generating functions of the multitype Yule process  $\Y$ reduce to that of the monotype Yule process $Y$, and we immediately get
$$M_{\ell}(\boldsymbol{a},t) =\frac{ c\e^{-t}}{1-c(1-\e^{-t})}\qquad \text{for }\quad t< \rho_{\ell}(\boldsymbol{a})=\log(1-1/c).$$
\end{remark}

\begin{proof} Using the independence of the Yule process and the assignation of types as  at the end of the proof of Lemma~\ref{L1}, we
see that
$$M_{\ell}(\boldsymbol{a},t)= \e^{-t}f(1-\e^{-t}),$$
where $f(x)=\sum_{n=0}^{\infty} a_n x^n$ is some entire series with positive coefficients (for notational simplicity, we do not indicate the dependency in $\ell$).
Writing $r>0$
for the radius of convergence of $f$ and then setting
$$\rho_{\ell}(\boldsymbol{a})\coloneqq \left\{
\begin{matrix} -\log(1-r) &\text{ if }r < 1,\\
\infty &\text{ if }r \geq 1,
\end{matrix}
\right.
$$
yields our claims provided that $\rho_{\ell}(\boldsymbol{a})$ does not depend on the index $\ell$.

 It thus only remains to check that $\rho_{\ell}(\boldsymbol{a})\geq \rho_{j}(\boldsymbol{a})$ for any $j\in \mathrm{Supp}(\boldsymbol{\nu})$ with ${a}_{j}>0$. We may assume that $\rho_{\ell}(\boldsymbol{a})<\infty$ since otherwise there is nothing to prove. Take any $t'>t>\rho_{\ell}(\boldsymbol{a})$, and  work under the law $\PP_j$ when the type of ancestor is $j$. The probability $p(t'-t)$ that
at time $t'-t$ the population has exactly two individuals, one with type $j$ and the other with type $\ell$, is strictly positive. The branching property then shows that
$$M_{j}(\boldsymbol{a},t')\geq p(t'-t) M_{\ell}(\boldsymbol{a},t) M_{j}(\boldsymbol{a},t)=\infty.$$
Hence $ \rho_{j}(\boldsymbol{a})\leq t'$, and the proof is now complete.
\end{proof}

We shall henceforth refer to
$\rho(\boldsymbol{a})$ as the explosion time. The problem of determining its value  is at the heart of the proof of Theorem~\ref{T1}. For instance, recall from the Cauchy–Hadamard theorem and Lemma~\ref{L1} that, taking ${a}_j=cj$ for all $j \in \mathrm{Supp}(\boldsymbol{\nu})$ and any $c$ sufficiently large so that $\rho(\boldsymbol{a})<\infty$, we obtain the identity
$$ \limsup_{n\to \infty} n^{-1}\log \E(Z(n))= -\log(1-\e^{-\rho(\boldsymbol{a})}) -\log c.$$
The first step for its resolution is closely related to the analytic approach\footnote{Note however that these works deal with moment generating functions, whereas  we rather consider here their factorial version.} of two-color P\'olya urn models by Flajolet \textit{et al.} \cite{Flajoes,Flajoau}, see also \cite{Inoue} for a version with random replacement schemes.
It  relies on the observation
that the factorial moment generating functions solves a multidimensional ODE.

We use the notation $\langle \cdot; \cdot \rangle$ for the scalar product on $\R^{\mathrm{Supp}(\boldsymbol{\nu})}$ and set
\begin{equation} \label{E:varphi}
\varphi(t)\coloneqq  (1-q) \left\langle \boldsymbol{\nu}; {\boldsymbol{M}}(\boldsymbol{a},t)\right \rangle-1.
\end{equation}
 For every $\ell\in \mathrm{Supp}(\boldsymbol{\nu})$ and $t< \rho(\boldsymbol{a})$, we also write
$$M'_{\ell}(\boldsymbol{a},t) \coloneqq \frac{\partial M_{\ell}}{\partial t} (\boldsymbol{a},t).
 $$

\begin{lemma} \label{L3} For every  $\boldsymbol{a}$, the  function ${\boldsymbol{M}}(\boldsymbol{a}, \cdot)$
 is the unique solution  on $[0,\rho(\boldsymbol{a}))$ to,
$$M'_{\ell}(\boldsymbol{a},t)= M_{\ell}(\boldsymbol{a},t)\left( q M_{\ell}(\boldsymbol{a},t)+\varphi(t)\right),\qquad \ell\in \mathrm{Supp}(\boldsymbol{\nu}),$$
with initial condition ${\boldsymbol{M}}(\boldsymbol{a}, 0)=\boldsymbol{a}$.
\end{lemma}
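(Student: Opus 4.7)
The plan is to derive the ODE by a first-step analysis based on the waiting time until the initial birth event of the multitype Yule process, exploiting the branching property. Under $\PP_\ell$, the population starts from a single individual of type $\ell$, so the first birth time $\tau$ is exponentially distributed with parameter $1$. Conditionally on $\tau = s$, the new-born child has the same type $\ell$ as its parent with probability $q$ and has type $k$ with probability $(1-q)\nu(k)$; after $\tau$, the branching property of $\Y$ ensures that the sub-populations descended from the parent (of type $\ell$) and from the child (of type $c$, say) evolve as independent multitype Yule processes with those respective starting types.

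On the event $\{\tau > t\}$ one has $\prod_j a_j^{Y_j(t)} = a_\ell$, which contributes $\e^{-t} a_\ell$ to $M_\ell(\boldsymbol{a}, t)$. Decomposing also on $\{\tau = s \leq t\}$ and on the type $c$ of the child, the branching property gives the Volterra-type equation
\begin{equation*}
M_\ell(\boldsymbol{a},t) = \e^{-t} a_\ell + \int_0^t \e^{-s} M_\ell(\boldsymbol{a},t-s)\bigl[ q M_\ell(\boldsymbol{a},t-s) + (1-q)\langle\boldsymbol{\nu};{\boldsymbol{M}}(\boldsymbol{a},t-s)\rangle\bigr]\dd s.
\end{equation*}
After the change of variable $u=t-s$ and multiplication by $\e^t$, this becomes
\begin{equation*}
\e^t M_\ell(\boldsymbol{a}, t) = a_\ell + \int_0^t \e^{u} M_\ell(\boldsymbol{a}, u)\bigl[q M_\ell(\boldsymbol{a}, u) + (1-q)\langle\boldsymbol{\nu};{\boldsymbol{M}}(\boldsymbol{a}, u)\rangle\bigr] \dd u.
\end{equation*}
Since $t \mapsto M_\ell(\boldsymbol{a}, t)$ is smooth on $[0,\rho(\boldsymbol{a}))$ by Lemma~\ref{L2}, I can then differentiate both sides in $t$ and rearrange to obtain $M'_\ell = M_\ell(q M_\ell + \varphi)$ in view of \eqref{E:varphi}. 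The initial condition $\boldsymbol{M}(\boldsymbol{a}, 0) = \boldsymbol{a}$ is immediate from $Y_\ell(0)=1$ and $Y_j(0)=0$ for $j\neq \ell$.

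Uniqueness should then follow from a routine Picard--Lindel\"of argument: because $\mathrm{k}^* < \infty$, the set $\mathrm{Supp}(\boldsymbol{\nu})$ is finite and the system is finite-dimensional with a polynomial (hence locally Lipschitz) right-hand side, so two continuous solutions sharing the initial value $\boldsymbol{a}$ must coincide in a neighbourhood of $0$; a standard continuation argument then propagates this agreement throughout $[0,\rho(\boldsymbol{a}))$, since any such solution is continuous and hence bounded on each compact sub-interval. The main point of delicacy will be the careful book-keeping needed to justify the Volterra equation by applying the strong Markov and branching properties at the random time $\tau$; once that is in place, the differentiation and the uniqueness step are standard.
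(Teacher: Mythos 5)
Your proof is correct and follows essentially the same route as the paper: condition on the (exponential) time of the first birth and the random type of the new child, invoke the branching property to obtain a Volterra-type integral equation, substitute $u=t-s$, and differentiate; your expression $(1-q)\langle\boldsymbol{\nu};\boldsymbol{M}\rangle$ is exactly $\varphi+1$, so the two integral equations coincide. The only addition is your explicit Picard--Lindel\"of uniqueness argument, which the paper leaves implicit but which is indeed the standard justification for the claimed uniqueness.
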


Remark that the differential equation satisfied by $\log M$ is analogue to the one solved by the Laplace exponent associated to continuous state branching processes; see e.g. \cite[Theorem 12.1]{Kypr}.

\begin{proof}
We denote by $T$ the first branching time of the multitype Yule process and by $L$ the random label of the newly added vertex. Applying the branching property at time $T$, we have
\begin{align*}
 M_\ell(\boldsymbol{a},t) &= a_\ell \P(T > t) + \E\left( M_\ell(\boldsymbol{a},t-T) M_L(\boldsymbol{a},t-T) \right)\\
 &= a_\ell e^{-t} + \int_0^t e^{-s} M_\ell(\boldsymbol{a},t-s) \left(q M_\ell(\boldsymbol{a},t-s) + \varphi(t-s) + 1\right) \dd s.
\end{align*}
Hence, by change of variable $u = t-s$, we obtain
\[
  e^t M_\ell(\boldsymbol{a},t) = a_\ell + \int_0^t e^u M_\ell(\boldsymbol{a},u) \left(q M_\ell(\boldsymbol{a},u) + \varphi(u) + 1\right).
\]
Taking the derivative of this expression in $t$ completes the proof.
\end{proof}

We finish this section by pointing at simple monotonicity properties of the factorial moment generating functions and their explosion times.

\begin{corollary} \label{C1} (i) For every $j,\ell\in \mathrm{Supp}(\boldsymbol{\nu})$ with
 $ {a}_j\leq {a}_\ell $,  the ratio function
 $$t\mapsto \frac{M_\ell(\boldsymbol{a},t)}{ M_j(\boldsymbol{a},t)}$$
 is monotone non-decreasing  on $[0, \rho(\boldsymbol{a}))$. In particular, if $ {a}_j ={a}_\ell $, then for all $t \geq 0$ we have $M_j(\boldsymbol{a},t)= M_\ell(\boldsymbol{a},t) $.

 (ii) Suppose that  $$ \sup_{t\geq 0}M_\ell(\boldsymbol{a},t) < \infty.$$
 Then for all $j\in \mathrm{Supp}(\boldsymbol{\nu})$ with $ {a}_j < {a}_\ell $, we have
 $$\lim_{t\to \infty} M_j(\boldsymbol{a},t) =0. $$
 \end{corollary}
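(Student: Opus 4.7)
The plan is to derive a single scalar ODE for the log-ratio $h\coloneqq \log(M_\ell/M_j)$ (implicitly assuming $a_j>0$; the case $a_j=0$ gives $M_j\equiv 0$ so both assertions are vacuous). Subtracting the two ODEs from Lemma~\ref{L3} yields
\[
 h'(t) \;=\; q\bigl(M_\ell(t)-M_j(t)\bigr) \;=\; qM_j(t)\bigl(\e^{h(t)}-1\bigr),
\]
whose right-hand side preserves the sign of $h$ because $M_j>0$ throughout $[0,\rho(\boldsymbol{a}))$.

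For part~(i), the hypothesis $a_j\leq a_\ell$ gives $h(0)\geq 0$; sign preservation forces $h\geq 0$ on $[0,\rho(\boldsymbol{a}))$, hence $h'\geq 0$, i.e.\ $M_\ell/M_j$ is non-decreasing. The borderline case $a_j=a_\ell$ gives $h(0)=0$, and then $M_j$ and $M_\ell$ satisfy the same Cauchy problem of Lemma~\ref{L3}; the uniqueness statement of that lemma yields $M_j\equiv M_\ell$.

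For part~(ii) I aim to show that $M_j$ is both integrable and uniformly continuous on $[0,\infty)$; Barbalat's lemma then gives $M_j(t)\to 0$. Integrability is essentially free once the log-ratio ODE is used. Setting $R\coloneqq M_\ell/M_j$ and $r\coloneqq a_\ell/a_j>1$, the equation for $h$ rewrites as the separable ODE $R'=qM_jR(R-1)$, which upon integration yields
\[
 q\int_0^t M_j(s)\,\dd s \;=\; \log\frac{1-1/R(t)}{1-1/r}.
\]
Since $R(t)\geq r>1$ by part~(i), the right-hand side is bounded uniformly in $t$ by $\log(r/(r-1))$, so $\int_0^\infty M_j(s)\,\dd s<\infty$.

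The main obstacle is uniform continuity of $M_j$: since $M_j'=M_j(qM_j+\varphi)$ and $\varphi$ depends on every coordinate $M_k$, I first need a uniform bound on all of them, not just on $M_\ell$. I plan to obtain this by bootstrapping from the ODE itself. The elementary lower bound $\varphi\geq (1-q)\nu(k)M_k-1$ gives the scalar differential inequality
\[
 M_k'(t) \;\geq\; M_k(t)\bigl((q+(1-q)\nu(k))\,M_k(t)-1\bigr),
\]
and comparison with the blow-up ODE $y'=y(\alpha y-1)$ shows that $M_k$ would escape to $+\infty$ in finite time as soon as it exceeded $2/(q+(1-q)\nu(k))$. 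This is incompatible with $\rho(\boldsymbol{a})=\infty$, which Lemma~\ref{L2} provides from the hypothesis $\sup_t M_\ell<\infty$. Hence every $M_k$ is bounded, $\varphi$ is bounded, $M_j'$ is bounded, so $M_j$ is Lipschitz and a fortiori uniformly continuous, and Barbalat's lemma concludes $M_j(t)\to 0$.
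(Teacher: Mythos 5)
Your proof is correct, and for part~(i) it is essentially the paper's argument: the paper proves the whole corollary in one line by observing from Lemma~\ref{L3} that $\frac{\dd}{\dd t}\log(M_\ell/M_j)=q(M_\ell-M_j)$, and you derive exactly this and run the sign-preservation/uniqueness argument. For part~(ii), however, you take a genuinely heavier route than the one-sided regularity argument the paper's terse proof seems to presuppose. Your chain---comparison with the blow-up ODE $y'=y(\alpha y-1)$ to bound \emph{every} $M_k$, hence $\varphi$ bounded, hence $M_j$ Lipschitz, hence Barbalat---is valid (note only that the blow-up threshold is already $1/\alpha$, not $2/\alpha$, though this is harmless), but it can be shortcut. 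Since $\varphi=(1-q)\langle\boldsymbol{\nu};\boldsymbol{M}\rangle-1\geq -1$ trivially, Lemma~\ref{L3} gives $M_j'\geq -M_j$, so $t\mapsto \e^{t}M_j(t)$ is non-decreasing and therefore $M_j(s)\geq \e^{-1}M_j(t)$ for all $s\in[t,t+1]$; combined with the integrability $\int_0^\infty M_j<\infty$ that you already established from integrating $R'=qM_jR(R-1)$, this yields $M_j(t)\leq \e\int_t^{t+1}M_j\to 0$ directly, with no need for Barbalat's lemma nor for uniform bounds on the other coordinates $M_k$. What your bootstrapped bound on all $M_k$ buys is more information (uniform boundedness of $\boldsymbol{M}$ and $\varphi$), but it is not required for the stated conclusion.
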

\begin{proof} The claims  readily derive from the observation from Lemma~\ref{L3} that the logarithmic derivative of the ratio function equals $q(M_\ell(\boldsymbol{a},t)- M_j(\boldsymbol{a},t))$.
\end{proof}

\section{Determining the factorial moment generating function}
The purpose of this section is to show that  the factorial moment generating functions ${\boldsymbol{M}}(\boldsymbol{a},t)$ can be determined rather explicitly, and this will be sufficient for our purposes. To do so, it is convenient to introduce a bivariate moment-generating function  $G$ defined by
\begin{equation}
  \label{eqn:defG}
  G(t,s) \coloneqq    \sum_j  \frac{M_j(\boldsymbol{a},t)}{1 - s M_j(\boldsymbol{a},t)} \nu(j)
\end{equation}
for any $ 0\leq t  <\rho(\boldsymbol{a})$ and
$ 0\leq s<1/\|{\boldsymbol{M}}(\boldsymbol{a},t)\|_{\infty}$.
Note that
$$ G(t,0)= \left\langle \boldsymbol{\nu}; {\boldsymbol{M}}(\boldsymbol{a},t)\right \rangle = \EE_{ \boldsymbol{\nu}} \left( \prod {a}_j^{Y_j(t)}\right)$$
is the function with explosion time $\rho(\boldsymbol{a})$ that we would like to determine, and that
\begin{equation} \label{E:G(0,s)}
 G(0,s) = \sum_j   \frac{{a}_j}{1 - s {a}_j} \nu (j)
 \end{equation}
is a known function of the variable $s$.

The motivation for introducing $G$ is that it satisfies  an inhomogeneous transport equation with growth in dimension $1$.
\begin{lemma} \label{L4}
\label{lem:pdeForG} For all $ 0\leq t  <\rho(\boldsymbol{a})$ and $0\leq s<1/\|{\boldsymbol{M}}(\boldsymbol{a},t)\|_{\infty}$,
the function $G$ satisfies
\[
  \partial_t G (t,s)= (q + s\varphi(t)) \partial_s G(t,s) + \varphi(t) G(t,s),
\]
where $\varphi(t)$ has been defined in \eqref{E:varphi}.
\end{lemma}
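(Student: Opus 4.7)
The plan is a direct computation: differentiate $G$ term by term in $s$ and in $t$, then substitute the ODE from Lemma~\ref{L3} for $M'_\ell$, and finally recognize the resulting sums as combinations of $G$ and $\partial_s G$. The validity of differentiating under the sum on the domain $0\leq t<\rho(\boldsymbol{a})$, $0\leq s<1/\|\boldsymbol{M}(\boldsymbol{a},t)\|_\infty$ is routine since all terms are positive and uniformly controlled by a convergent geometric series on compact subdomains, so I would not dwell on it.

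First I compute
\[
  \partial_s G(t,s) = \sum_j \frac{M_j(\boldsymbol{a},t)^2}{(1-sM_j(\boldsymbol{a},t))^2}\,\nu(j),
\]
and, using $\partial_t\bigl(M_j/(1-sM_j)\bigr) = M'_j/(1-sM_j)^2$,
\[
  \partial_t G(t,s) = \sum_j \frac{M'_j(\boldsymbol{a},t)}{(1-sM_j(\boldsymbol{a},t))^2}\,\nu(j).
\]
Then I substitute $M'_\ell(\boldsymbol{a},t) = M_\ell(\boldsymbol{a},t)\bigl(qM_\ell(\boldsymbol{a},t)+\varphi(t)\bigr)$ from Lemma~\ref{L3} to split $\partial_t G$ into two pieces. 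The first piece is immediately $q\,\partial_s G(t,s)$. The second piece is $\varphi(t)\sum_j M_j(\boldsymbol{a},t)\nu(j)/(1-sM_j(\boldsymbol{a},t))^2$.

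The key algebraic identity, and the only thing worth checking, is
\[
  \sum_j \frac{M_j(\boldsymbol{a},t)}{(1-sM_j(\boldsymbol{a},t))^2}\,\nu(j) \;=\; G(t,s) + s\,\partial_s G(t,s),
\]
which follows from the elementary decomposition $M_j/(1-sM_j)^2 = M_j/(1-sM_j) + sM_j^2/(1-sM_j)^2$ (equivalent to writing $1 = (1-sM_j) + sM_j$ in the numerator). Plugging this back in gives
\[
  \partial_t G(t,s) = q\,\partial_s G(t,s) + \varphi(t)\bigl(G(t,s) + s\,\partial_s G(t,s)\bigr) = (q + s\varphi(t))\,\partial_s G(t,s) + \varphi(t) G(t,s),
\]
which is the claimed transport equation. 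There is no real obstacle here; the only substantive step is spotting the identity that repackages $\sum_j M_j\nu(j)/(1-sM_j)^2$ into $G + s\partial_sG$, which is precisely what makes $G$ (rather than, say, $\langle\boldsymbol{\nu};\boldsymbol{M}\rangle$ alone) the natural unknown to introduce.
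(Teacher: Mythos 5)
Your computation is correct. The paper proves the same transport equation by a slightly different route: it expands $G(t,s)=\sum_{k\geq1}s^{k-1}R_k(t)$ in powers of $s$, where $R_k(t)=\sum_j\nu(j)M_j(\boldsymbol{a},t)^k$, derives from Lemma~\ref{L3} the recursion $R_k'=k(qR_{k+1}+\varphi R_k)$, and then recognizes the resulting power series in $s$ as $\partial_s\bigl(qG+\varphi sG\bigr)$. You instead differentiate the closed-form rational expression for $G$ directly, substitute the ODE for $M_\ell'$, and close the computation with the algebraic identity
\[
  \frac{M_j}{(1-sM_j)^2}=\frac{M_j}{1-sM_j}+\frac{sM_j^2}{(1-sM_j)^2},
\]
which packages the leftover sum as $G+s\,\partial_sG$. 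The two arguments are really the same calculation in different coordinates: your partial-fraction split is exactly what the index shift $\sum_k s^kR_{k+1}=G-R_1$ achieves in the power-series picture. The paper's version makes the appearance of $\partial_s$ mechanical (the factor $k$ in $R_k'$ is automatically an $s$-derivative), whereas yours is a bit shorter and avoids manipulating an infinite series term by term, at the cost of needing to spot the decomposition. Your brief remark on justifying differentiation under the sum is also fine: on compacts in the stated domain the summands and their $t$- and $s$-derivatives are dominated by convergent geometric series, so the interchange is legitimate.
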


\begin{proof}
For all $k \geq 1$, we introduce the function $ R_k : [0,\rho(\boldsymbol{a}))\to \R_+$ by
\[
 R_k(t)\coloneqq
   \sum_j \nu(j) M_j(\boldsymbol{a},t)^k.
\]
The function $G(t,\cdot)$ arises in this setting as the moment-generating function of the sequence $(R_k(t))_{k\geq 1}$. Namely, we have  for every $ 0\leq s<1/\|{\boldsymbol{M}}(\boldsymbol{a},t)\|_{\infty}$ that
\[
  \sum_{k=1}^\infty s^{k-1} R_k(t)  =   \sum_j \nu(j)  \sum_{k=1}^\infty s^{k-1} M_j(\boldsymbol{a},t)^k = G(t,s) . \]
Note also that for $k=1$, we have
$$R_1(t)= \left\langle \boldsymbol{\nu}; {\boldsymbol{M}}(\boldsymbol{a},t)\right \rangle  = G(t,0).$$

We compute the derivative of $R_k$ using Lemma~\ref{L3} at the second identity below, and get
\begin{align*}
  R_k'(t) &= k\sum_j \nu(j) M_j(\boldsymbol{a},t)^k \frac{M'_j(\boldsymbol{a},t)}{M_j(\boldsymbol{a},t)} \\
  &= k \left( qR_{k+1}(t) +\varphi(t) R_k(t) \right).
\end{align*}
This yields
\begin{align*}
  \partial_t G(t,s) &=  \sum_{k=1}^\infty k  s^{k-1}  \left( qR_{k+1}(t) +\varphi(t) R_k(t) \right) \\
   &= \partial_s \left(\sum_{k=1}^\infty s^k \left( qR_{k+1}(t) + \varphi(t)R_k(t) \right)\right) \\
  &= \partial_s \left( q G(t,s) + \varphi(t)s G(t,s) \right).
\end{align*}
We complete the proof from the chain rule.
\end{proof}

Lemma~\ref{L4} may not look quite satisfactory, because the coefficients of the PDE there involve the unknown function $\varphi$ defined in \eqref{E:varphi}, which precisely the one which we would like to determine. However, its main input   is that this PDE can be analyzed in terms of a trajectory on the real line, as opposed to the multidimensional ODE of Lemma~\ref{L3}.
Lemma~\ref{L4} shows indeed that the function $G$ defined in \eqref{eqn:defG} satisfies an inhomogeneous  transport equation, and as a consequence,
$$\varphi(t)=(1-q)G(t,0)-1$$ can be obtained as the solution of an integral equation as we shall now see.
\begin{lemma} \label{L:A}
\label{lem:transport} Introduce the functions
\[ A: [0,\rho(\boldsymbol{a}))\to \R_+, \qquad
  A(t) = \int_0^t \exp\left( \int_0^s\varphi(u)  \dd u \right) \dd s,
\]
and
\begin{equation}\label{E:F}
 F: [0, 1/(q\| \boldsymbol{a}\|_{\infty}))\to \R_+, \qquad
F(s) = (1-q) \sum_{j} \frac{ {a}_j }{1- qs {a}_j}\nu(j).\end{equation}
Then for  $t_0>0$ sufficiently small, $A$ is the unique solution  on $[0,t_0]$ of the Cauchy problem
\begin{equation}
  \label{eqn:diffA}
  A''(t) =  A' (t) \left( A' (t) F(A(t)) - 1\right)\quad \text{ with } A(0) = 0 \text{ and } A'(0) = 1.
\end{equation}
\end{lemma}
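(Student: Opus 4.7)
The plan is to verify directly that the explicit $A$ defined above satisfies the stated Cauchy problem, using the solutions of the coordinate ODEs from Lemma~\ref{L3} to dissolve the self-referential nature of $\varphi$. The initial conditions are immediate: differentiating the integral formula gives $A'(t) = \exp\left(\int_0^t \varphi(u) \, \dd u\right)$, whence $A(0) = 0$, $A'(0) = 1$, and $A''(t) = \varphi(t) A'(t)$. Existence thus reduces to establishing the identity $\varphi(t) + 1 = A'(t) F(A(t))$ on a right-neighborhood of $0$.

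The key observation is that the coordinate equation $M_\ell' = M_\ell(q M_\ell + \varphi)$ from Lemma~\ref{L3} is of Bernoulli type. For $\ell$ with $a_\ell > 0$, I set $u_\ell = 1/M_\ell$, which linearises the equation into $u_\ell' + \varphi u_\ell = -q$. Using $A'$ as an integrating factor gives $(A' u_\ell)' = -q A'$, and integrating against the datum $u_\ell(0) = 1/a_\ell$ yields $A'(t) u_\ell(t) = 1/a_\ell - q A(t)$, that is,
$$ M_\ell(\boldsymbol{a}, t) = \frac{a_\ell A'(t)}{1 - q a_\ell A(t)}. $$
(The identity is trivially correct also when $a_\ell = 0$, since then $M_\ell(\boldsymbol{a}, \cdot) \equiv 0$.) Summing against $\nu(\ell)$ then produces
$$ (1-q) \langle \boldsymbol{\nu}; \boldsymbol{M}(\boldsymbol{a}, t) \rangle = A'(t)\cdot (1-q) \sum_\ell \frac{a_\ell \,\nu(\ell)}{1 - q a_\ell A(t)} = A'(t) F(A(t)), $$
which is exactly the desired identity in view of \eqref{E:varphi}. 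Combining with $A''(t) = \varphi(t) A'(t)$ gives \eqref{eqn:diffA}.

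For uniqueness, I would rewrite the second-order problem as the first-order system $A' = B$, $B' = B(B F(A) - 1)$ in $(A,B)$. The right-hand side is real-analytic in $(A,B)$ on the open set $\{A < 1/(q\|\boldsymbol{a}\|_{\infty})\}$, which contains the initial point $(0,1)$, so the Picard-Lindel\"of theorem yields a unique local solution on some $[0,t_0]$.

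I expect the main technical point to be not the computation, which reduces to an integrating-factor trick, but the careful handling of the domain: the manipulations above are only justified on an interval where $A(t) < 1/(q\|\boldsymbol{a}\|_{\infty})$ and each $M_\ell(\boldsymbol{a}, t)$ remains finite. Since $A(0) = 0$ and $M_\ell(\boldsymbol{a}, 0) = a_\ell$, Lemma~\ref{L2} together with continuity of $A$ and of the coordinate functions produces a common interval $[0,t_0]$ on which every quantity is well-defined; shrinking $t_0$ if necessary makes this interval coincide with the one provided by Picard-Lindel\"of.
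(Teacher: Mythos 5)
Your proof is correct, but takes a genuinely different route from the paper's. The paper proves Lemma~\ref{L:A} via Lemma~\ref{L4}: it derives the transport PDE for the bivariate generating function $G$, applies the method of characteristics to produce a pair $(\gamma,\sigma)$ with $\gamma(t)G(t,\sigma(t))$ constant, and reads off $G(t_0,0)=A'(t_0)G(0,qA(t_0))$, which together with $\varphi = A''/A'$ and \eqref{E:G(0,s)} yields \eqref{eqn:diffA}. You instead bypass the PDE entirely: you linearise the Bernoulli equation of Lemma~\ref{L3} by $u_\ell = 1/M_\ell$, integrate with $A'$ as integrating factor to obtain the closed form $M_\ell = a_\ell A'/(1-qa_\ell A)$, and then sum against $\boldsymbol{\nu}$ to get $\varphi + 1 = A'\,F(A)$. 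This is precisely the computation the paper defers to Corollary~\ref{C:Mell} (where it is presented \emph{after} $\varphi$ has been determined via Proposition~\ref{P1}); you observe that the identity $M_\ell = a_\ell A'/(1-qa_\ell A)$ is available already at this stage, without knowing $\varphi$ explicitly, and that it immediately produces the Cauchy problem for $A$. Your route is shorter and more elementary, as it makes Lemma~\ref{L4} and the characteristics argument unnecessary for this particular conclusion; the paper's route, on the other hand, exhibits the transport-equation structure explicitly, which connects to the Flajolet-style analytic framework invoked in the introduction. Your remarks on the domain (needing $A(t)<1/(q\|\boldsymbol{a}\|_\infty)$ and finiteness of each $M_\ell$ for small $t$, secured by continuity and Lemma~\ref{L2}) and on uniqueness via Picard--Lindel\"of are both handled appropriately.
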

\begin{proof}
The method of characteristics applied to the PDE of Lemma~\ref{L4}  incites us  to construct a pair of functions
$$\gamma :  [0,t_0]\to \R_+ \quad \text{and} \quad \sigma :  [0,t_0]\to \R_+,$$
 such that $\sigma(t)< 1/\|{\boldsymbol{M}}(\boldsymbol{a},t)\|_{\infty}$ and
 \begin{equation}
  \label{eqn:fs}
 \gamma(t) G(t,\sigma(t)) = \gamma(0) G(0,\sigma(0)) \qquad \text{for all }0\leq t \leq  t_0,
\end{equation}
where  $0<t_0< \rho(\boldsymbol{a})$ will be chosen later on.
That is, we request the function $t\mapsto \gamma(t) G(t,\sigma(t))$ to have derivative identical to $0$. Thanks to Lemma~\ref{L4}, this
 translates into
\begin{align*}
 & \gamma'(t) G(t,\sigma(t)) + \gamma(t) \partial_t G(t,\sigma(t)) +  \gamma(t) \sigma'(t) \partial_s G(t,\sigma(t))\\
  &= G(t,\sigma(t))\left(\gamma'(t) +  \varphi(t) \gamma(t)\right)
   + \gamma(t)\partial_sG(t,\sigma(t)) \left(\sigma'(t) + q + \varphi(t)\sigma(t)  \right)
 \\ &= 0.
\end{align*}
We deduce that \eqref{eqn:fs} is satisfied provided that $\gamma$ and $\sigma$ solve the  differential equations
\[
  \gamma' + \varphi \gamma = 0 \quad \text{and} \quad \sigma' + \varphi \sigma +q = 0.
\]
In the notation of the statement, we may thus take
$$\gamma(t) =1/A' (t)\quad \text{and} \quad \sigma(t)= q \int_t^{t_0} \exp\left(\int_t^s \varphi(u)\dd u\right)\dd s. $$
We now choose $t_0>0$ sufficiently small so that  $\sigma(t)< 1/\|{\boldsymbol{M}}(\boldsymbol{a},t)\|_{\infty}$ for all $t\leq t_0$.
Observe that $\sigma(0)=q A(t_0)$ and $\sigma(t_0)=0$,
and we then  get from \eqref{eqn:fs}
\[
  G(t_0,0) = A'(t_0) G(0,qA(t_0)).
\]
This allows us to rewrite the identity
\[
   \varphi(t_0) = (1-q)G(t_0,0)- 1 =(1-q)A'(t_0)G(0,qA(t_0)) - 1.
\]
Since $\varphi=A''/A'$, we conclude from  \eqref{E:G(0,s)}  that $A$ satisfies \eqref{eqn:diffA}.
\end{proof}

We can now  solve the Cauchy problem \eqref{eqn:diffA}. Introduce first
\begin{equation} \label{E:pilambda}\Pi_{\boldsymbol{a}}(x)\coloneqq  \prod (1-x{a}_j)^{\nu(j)(1-q) /q},  \qquad 0\leq x \leq 1/ \|\boldsymbol{a}\|_{\infty},
\end{equation}
and its integral
\begin{equation} \label{E:Ilambda}
I_{\boldsymbol{a}}(x)\coloneqq  \int_0^x \Pi_{\boldsymbol{a}}(y)\dd y,  \qquad 0\leq x \leq 1/ \|\boldsymbol{a}\|_{\infty}.
\end{equation}
If we set
$$i_{\boldsymbol{a}}\coloneqq I_{\boldsymbol{a}}(1/ \|\boldsymbol{a}\|_{\infty}),$$
then $I_{\boldsymbol{a}}$ defines a bijection from $[0, 1/ \|\boldsymbol{a}\|_{\infty}]$ to $[0,i_{\boldsymbol{a}}]$, and we write
$$I_{\boldsymbol{a}}^{-1}: [0,i_{\boldsymbol{a}}] \to [0, 1/ \|\boldsymbol{a}\|_{\infty}]$$
for the reciprocal bijection.

\begin{proposition} \label{P1} The explosion time can be identified as
$$\rho(\boldsymbol{a}) = \left\{
\begin{matrix}
 -\log(1-i_{\boldsymbol{a}}/q) & \text{if }i_{\boldsymbol{a}}<q,\\
\infty & \text{if }i_{\boldsymbol{a}}\geq q,
\end{matrix} \right.$$
and for $x\in[0,\rho(\boldsymbol{a}))$, we have
\begin{equation}\label{E:A}
qA(x) =   I^{-1}_{\boldsymbol{a}}( q(1-\e^{-x})).
\end{equation}
\end{proposition}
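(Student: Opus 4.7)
The plan is to solve the Cauchy problem \eqref{eqn:diffA} explicitly, then read off the candidate explosion time from the resulting formula, and finally match it with the intrinsic quantity $\rho(\boldsymbol{a})$. Dividing the ODE of \eqref{eqn:diffA} by the positive function $A'$ rewrites it as $(\log A'(t))' = A'(t) F(A(t)) - 1$. Since $A'(t) F(A(t))$ is the derivative of $\mathcal{F}(A(t))$ where $\mathcal{F}(y) \coloneqq \int_0^y F(s) \dd s$, integrating from $0$ to $t$ with the initial conditions $A(0)=0$, $A'(0)=1$ yields $\log A'(t) = \mathcal{F}(A(t)) - t$. A direct computation starting from the definition \eqref{E:F} of $F$ gives
\begin{equation*}
\mathcal{F}(y) = -\frac{1-q}{q} \sum_j \nu(j) \log(1 - q y a_j) = -\log \Pi_{\boldsymbol{a}}(qy),
\end{equation*}
so the second-order equation reduces to the first-order identity $\Pi_{\boldsymbol{a}}(qA(t))\, A'(t) = \e^{-t}$. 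Recognising the left-hand side as $\tfrac{1}{q}\bigl(I_{\boldsymbol{a}}(qA(\cdot))\bigr)'(t)$ and integrating once more, using $I_{\boldsymbol{a}}(0)=0$, one obtains $I_{\boldsymbol{a}}(qA(t)) = q(1-\e^{-t})$, which is exactly \eqref{E:A} as long as the right-hand side remains in the domain $[0, i_{\boldsymbol{a}})$ of $I_{\boldsymbol{a}}^{-1}$.

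This already singles out a candidate explosion time $\rho^* \coloneqq -\log(1 - i_{\boldsymbol{a}}/q)$ when $i_{\boldsymbol{a}} < q$, and $\rho^* \coloneqq \infty$ otherwise; the second task is to show that $\rho(\boldsymbol{a}) = \rho^*$. For the bound $\rho(\boldsymbol{a}) \leq \rho^*$, observe that $A'(t) = \exp\!\left(\int_0^t \varphi(u) \dd u\right)$ is continuous on $[0, \rho(\boldsymbol{a}))$; on the other hand, the uniqueness part of Lemma~\ref{L:A} forces the explicit formula to hold on $[0, \min(\rho(\boldsymbol{a}), \rho^*))$, and from it we read $A'(t) = \e^{-t}/\Pi_{\boldsymbol{a}}(qA(t)) \to +\infty$ as $t \uparrow \rho^*$. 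Hence $\rho^* < \rho(\boldsymbol{a})$ is incompatible with the continuity of $A'$ at $\rho^*$.

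For the reverse bound $\rho(\boldsymbol{a}) \geq \rho^*$, the plan is to use that $A$, and hence $\varphi = A''/A'$, are now known explicitly on the whole of $[0, \rho^*)$, and to solve the Riccati ODE of Lemma~\ref{L3} for each $M_\ell$ directly. The substitution $N_\ell = 1/M_\ell$ linearises it to $N_\ell' + \varphi N_\ell = -q$, and using $A'$ as integrating factor (since $A'' = \varphi A'$) yields
\begin{equation*}
M_\ell(\boldsymbol{a}, t) = \frac{a_\ell A'(t)}{1 - q a_\ell A(t)}.
\end{equation*}
Because $a_\ell \leq \|\boldsymbol{a}\|_\infty$, the denominator stays strictly positive on $\{qA(t) < 1/\|\boldsymbol{a}\|_\infty\} = [0, \rho^*)$, where $A'$ is also finite; this provides a finite candidate extension of each $M_\ell(\boldsymbol{a}, \cdot)$ to $[0, \rho^*)$, and by ODE uniqueness this extension must coincide with the original $M_\ell$, giving $\rho(\boldsymbol{a}) \geq \rho^*$. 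The main delicate point will be precisely this last step, which requires carefully separating the intrinsic interval $[0, \rho(\boldsymbol{a}))$ on which $\boldsymbol{M}(\boldsymbol{a}, \cdot)$ is \emph{a priori} defined from the possibly larger interval $[0, \rho^*)$ on which the explicit formulas make sense and must be verified to extend them.
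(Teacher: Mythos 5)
Your derivation of the explicit formula is a genuinely cleaner route than the paper's. Rather than changing variables via $H = A'\circ A^{-1}$ and solving a linear ODE for $H$, you divide through by $A'$, recognise $A'F(A)$ as the total derivative of $\mathcal{F}(A) = -\log \Pi_{\boldsymbol a}(qA)$, and integrate twice to land directly on $I_{\boldsymbol a}(qA(t)) = q(1-\e^{-t})$. This is nicer; the paper's substitution obtains the same conclusion but via a slightly longer detour.

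The gap is in the extension step, and it is real. Lemma~\ref{L:A} only asserts that $A$ satisfies the Cauchy problem \eqref{eqn:diffA} on $[0,t_0]$ for $t_0$ \emph{small}, because the characteristic curve in its proof must stay in the domain of $G$. So ``the uniqueness part of Lemma~\ref{L:A} forces the explicit formula to hold on $[0,\min(\rho(\boldsymbol a),\rho^*))$'' is not available: ODE uniqueness only carries the identity $qA(t)=I_{\boldsymbol a}^{-1}(q(1-\e^{-t}))$ to the interval where the ODE is actually known to hold, which is a priori just $[0,t_0]$. The paper closes this gap by an analyticity argument, not an ODE argument: from Lemma~\ref{L2}, each $M_\ell(\boldsymbol a, -\log(1-z))$ is a power series in $z$ with non-negative coefficients whose radius of convergence is $1-\e^{-\rho(\boldsymbol a)}$; tracing through \eqref{E:varphi} and the definition of $A$, the function $A(-\log(1-z))$ is itself analytic in $z$ with this same radius of convergence, while $I_{\boldsymbol a}^{-1}$ is analytic on $[0,i_{\boldsymbol a}]$. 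Since the two sides of \eqref{E:A} coincide on $[0,t_0]$ and are both analytic, they coincide on their common domain by uniqueness of analytic continuation, and matching radii of convergence (using Pringsheim's theorem, implicit in the non-negativity of coefficients, so that the radius really is the location of a singularity) gives $\rho(\boldsymbol a)=\rho^*$.

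The same problem surfaces in your lower bound $\rho(\boldsymbol a)\ge\rho^*$. You say ``by ODE uniqueness this extension must coincide with the original $M_\ell$''; but if $\rho(\boldsymbol a)<\rho^*$, then $M_\ell(\boldsymbol a,\cdot)$ is simply \emph{not defined} beyond $\rho(\boldsymbol a)$ (Lemma~\ref{L2} says it is $+\infty$ there), so there is no ``original $M_\ell$'' to coincide with, and ODE uniqueness only says the two agree on $[0,\rho(\boldsymbol a))$ — which yields no contradiction. What actually forces the contradiction is again Pringsheim: $z\mapsto M_\ell(\boldsymbol a,-\log(1-z))$ has a singularity at $z=1-\e^{-\rho(\boldsymbol a)}$, whereas your explicit formula $a_\ell\tilde A'/(1-qa_\ell\tilde A)$ is analytic past that point. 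You flag this last step as ``the main delicate point'' yourself; the missing ingredient you need to name is analyticity plus Pringsheim's theorem, which is precisely how the paper resolves it.
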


\begin{proof}  Write  $A^{-1}$ for the reciprocal  of the bijection $A:  [0,\rho(\boldsymbol{a}))\to [0,A(\rho(\boldsymbol{a})))$,
and set  $H\coloneqq A'\circ A^{-1}$, so $H$ is the inverse of the derivative of $A^{-1}$. Using  \eqref{eqn:diffA},  we compute the derivate of $H$ on some small neighborhood of the origin as
\begin{equation} \label{E:H'}
H' = \frac{A'' \circ A^{-1}}{H} = HF - 1.
\end{equation}
Plainly, we have also  the initial value $H(0)=1$.

It is now straightforward to solve this first order linear ODE for $H$. The function $F$ defined in  \eqref{E:F} bears a close connexion to the logarithmic derivative of the function $ \Pi_{\boldsymbol{a}}$ in \eqref{E:pilambda}, namely
$$F(x) =-\frac{q\Pi'_{\boldsymbol{a}}(qx)}{\Pi_{\boldsymbol{a}}(qx)}.$$
Using the notation \eqref{E:Ilambda}, this yields
\begin{equation} \label{E:H}
H(x) =  \frac{ q- I_{\boldsymbol{a}}(qx)}{q\Pi_{\boldsymbol{a}}(qx)};
\end{equation}
more precisely, this identity is valid for all $x>0$ sufficiently small.

Now observe from \eqref{E:H} that $1/H(x) = (A^{-1})'(x)$ is the derivative of
$$B(x)\coloneqq -\log \left( 1- I_{\boldsymbol{a}}(qx)/q\right), \qquad 0\leq x < 1/(q\|{\boldsymbol{a}} \|_{\infty}).$$
The functions  $A^{-1}$ and $B$ both vanish at $x=0$ and have the same derivative on some right neighborhood of $0$, on which they therefore coincide.
Computing the reciprocal function $B^{-1}$, we arrive at \eqref{E:A}, provided that  $x>0$ is sufficiently small.
Tracing the definition, we infer from Lemma~\ref{L2} that $A$ is analytic on $[0,\rho(\boldsymbol{a}))$,
and $I^{-1}_{\boldsymbol{a}}$ is also an analytic function on $[0,i_{\boldsymbol{a}}]$.
The radii of convergence of $A$ and $B^{-1}$ are the same, which yields the  formula for the explosion time. We complete the proof by uniqueness of analytic continuation.
\end{proof}

It is now an easy matter to express the factorial moment generating function ${\boldsymbol{M}}(\boldsymbol{a}, \cdot)$ in terms of the function $A$ that has just been determined.

\begin{corollary}\label{C:Mell} For every  $\boldsymbol{a}$ and $\ell\in \mathrm{Supp}(\boldsymbol{\nu})$ with $a_\ell>0$, we have
$$M_\ell(\boldsymbol{a},t) =  \frac{a_\ell A'(t) }{ 1 - q a_\ell A(t)}, \qquad   t\in [0,\rho(\boldsymbol{a})).$$
\end{corollary}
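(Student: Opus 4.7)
The plan is to reduce the nonlinear Riccati-type ODE from Lemma~\ref{L3} to a linear first-order ODE via the substitution $u_\ell(t) \coloneqq 1/M_\ell(\boldsymbol{a},t)$, and then to solve it explicitly using an integrating factor expressed in terms of $A$.

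First, differentiating $u_\ell = 1/M_\ell$ and using Lemma~\ref{L3}, we compute
$$u_\ell'(t) = -\frac{M_\ell'(\boldsymbol{a},t)}{M_\ell(\boldsymbol{a},t)^2} = -\frac{qM_\ell(\boldsymbol{a},t) + \varphi(t)}{M_\ell(\boldsymbol{a},t)} = -q - \varphi(t) u_\ell(t).$$
Thus $u_\ell$ solves the linear inhomogeneous ODE $u_\ell' + \varphi u_\ell = -q$ with initial condition $u_\ell(0) = 1/a_\ell$.

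Second, I would identify the appropriate integrating factor from the construction of $A$ in Lemma~\ref{L:A}. By definition, $A'(t) = \exp\bigl(\int_0^t \varphi(u)\, \dd u\bigr)$, so $A'(0)=1$ and $(\log A')'(t) = \varphi(t)$, which is the identity $\varphi = A''/A'$ already used in that proof. Multiplying the linear ODE by $A'$ and recognizing the left-hand side as a total derivative,
$$\bigl(A'(t) u_\ell(t)\bigr)' = -q A'(t).$$
Integrating from $0$ to $t$ and using $A(0)=0$, $A'(0)=1$ together with $u_\ell(0)=1/a_\ell$ gives
$$A'(t) u_\ell(t) = \frac{1}{a_\ell} - q A(t),$$
and inverting this relation yields exactly
$$M_\ell(\boldsymbol{a},t) = \frac{a_\ell A'(t)}{1 - q a_\ell A(t)}.$$

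The only thing to justify is that this formula remains valid on all of $[0,\rho(\boldsymbol{a}))$ and not just on a small neighborhood of $0$. The computation above is legitimate as long as $M_\ell(\boldsymbol{a},t) > 0$ stays finite, which by Lemma~\ref{L2} holds throughout $[0,\rho(\boldsymbol{a}))$, and it is consistent with the characterization of $\rho(\boldsymbol{a})$ from Proposition~\ref{P1}: indeed $qA(t) = I_{\boldsymbol{a}}^{-1}(q(1-\e^{-t}))$ stays strictly below $1/\|\boldsymbol{a}\|_\infty$ for $t<\rho(\boldsymbol{a})$, which ensures $1 - q a_\ell A(t) > 0$ on this interval. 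I do not anticipate any serious obstacle; the main conceptual step is simply recognizing that the substitution $u_\ell = 1/M_\ell$ linearizes the ODE and that the integrating factor is precisely $A'$, which is automatic from the definition of $A$.
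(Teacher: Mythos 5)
Your proof is correct and follows essentially the same route as the paper: Lemma~\ref{L3} is a Bernoulli/Riccati equation, the substitution $u_\ell = 1/M_\ell$ linearizes it, and the integrating factor $A'(t) = \exp\bigl(\int_0^t \varphi\bigr)$ solves it with the given initial condition. Your closing remark about the validity on all of $[0,\rho(\boldsymbol{a}))$ via $1 - q a_\ell A(t) > 0$ is a minor extra care that the paper leaves implicit.
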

\begin{proof}   We re-visit the equation in Lemma \ref{L3} and now  view it as a Bernoulli differential equation involving the known function $\varphi$. The latter can be solved explicitly using the transformation $y(t)=1/M_\ell(\boldsymbol{a},t)$; this  yields the linear ODE $$-y'=q+\varphi y.$$
The solution for the initial condition $y(0)=1/a_\ell$ is given by
$$y(t) =  \frac{a_\ell \exp\left(\int_0^t \varphi(s)\dd s\right) }{ 1 - q a_\ell \int_0^t \left(  \exp\left(\int_0^r \varphi(s)\dd s\right) \right) \dd r}, \qquad   t\in [0,\rho(\boldsymbol{a})).$$
Recalling the notation in Lemma \ref{L:A}, we arrive at the stated formula.
\end{proof}

\section{Singularity analysis of the critical case}
This section focuses on the critical case where
\begin{equation} \label{E:crit}
i_{\boldsymbol{a}}=q,
\end{equation}
so by Proposition~\ref{P1}, the explosion time $\rho(\boldsymbol{a})$ is infinite.
 For the sake of simplicity, we also request that
 there exists a unique $j_1\in \mathrm{Supp}(\boldsymbol{\nu})$ such that ${a}_{j_1}= \|\boldsymbol{a}\|_{\infty}$, i.e.
\begin{equation}\label{E:j_1}
{a}_{j_1}>{a}_j\quad \text{ for all }j\neq j_1.\end{equation}
 Although the situation where the maximum is attained for two or more indices can be treated similarly, only
 \eqref{E:j_1} will be relevant for the proof of Theorem~\ref{T1}.

Using the notation
 $\log: \C\backslash \R_- \to \C$ for the principal determination of the complex logarithm and
$$\mathbb{D}\coloneqq \{z\in \C: |z|< 1\}$$
for  the open unit disk in $\C$,
we know from \eqref{E:varphi}
and the proof of  Lemma~\ref{L2} that
$\varphi(-\log(1-z))$ defines an analytic function on $\mathbb{D}$. In short, our main purpose is
show that $\Phi$ can be extended analytically to a domain
$$\Delta(R,\theta)\coloneqq \{z\in \C: |z|< R, \, z\neq 1 \text{ and }|\arg(z-1)|>\theta\}$$
for some $R>1$ and acute angle $\theta \in(0,\pi/2)$, and
 to analyze the asymptotic behavior as $z$ approaches $1$  in $\Delta$.

 \begin{proposition}\label{P2} The following holds under the assumptions \eqref{E:crit} and \eqref{E:j_1}.
 For any acute angle $\theta \in(0,\pi/2)$, there exists some $R=R(\theta)>1$ such that the function $z\mapsto\varphi(-\log(1-z))$ can be extended analytically to $\Delta(R,\theta)$. Moreover,   as $z$ approaches $1$ in $\Delta(R,\theta)$, one has
 $$ \varphi(-\log(1-z)) + 1/\beta  = O\left(|1-z|^{1/\beta}\right),$$
 where
 \begin{equation}\label{E:beta}
 \beta\coloneqq 1+(1-q)\nu(j_1)/q >1.
 \end{equation}

 \end{proposition}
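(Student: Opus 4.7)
The plan is to combine the explicit formula from Proposition~\ref{P1} with standard singularity analysis at $z=1$. Under~\eqref{E:crit}, Proposition~\ref{P1} gives $qA(x) = I_{\boldsymbol{a}}^{-1}(q(1-\e^{-x}))$ on $[0,\infty)$; introducing $\tilde{A}(z) := A(-\log(1-z))$ for $z \in [0,1)$, this becomes $q\tilde{A}(z) = I_{\boldsymbol{a}}^{-1}(qz)$. Differentiating $qz = I_{\boldsymbol{a}}(q\tilde{A}(z))$ yields $\tilde{A}'(z) = 1/\Pi_{\boldsymbol{a}}(q\tilde{A}(z))$, while the chain rule applied to $\varphi = A''/A'$ gives the useful identity
\[
\varphi(-\log(1-z)) + 1 = (1-z)\,\tilde{A}''(z)/\tilde{A}'(z).
\]
Everything thus reduces to controlling the singular behaviour of $q\tilde{A}(z)$ as $qz$ tends to $i_{\boldsymbol{a}} = q$, equivalently as $z\to 1$.

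Assumption~\eqref{E:j_1} isolates a single branch-point contribution in~\eqref{E:pilambda}, and my next step is to factor
\[
\Pi_{\boldsymbol{a}}(y) = (1 - a_{j_1} y)^{\beta - 1} h(y),
\]
with $h$ holomorphic and non-vanishing in a neighbourhood of $1/a_{j_1}$ (so $h(1/a_{j_1}) = \prod_{j \neq j_1}(1 - a_j/a_{j_1})^{\nu(j)(1-q)/q} > 0$). Setting $w := 1 - a_{j_1} y$ and integrating the power series of $h$ term by term, I would obtain
\[
q - I_{\boldsymbol{a}}(y) = w^\beta H(w),
\]
where $H$ is holomorphic at $0$ with $H(0) = h(1/a_{j_1})/(a_{j_1}\beta) > 0$; the constant of integration is pinned down by~\eqref{E:crit}. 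Then $w(z) := 1 - q a_{j_1}\tilde{A}(z)$ satisfies $q a_{j_1}(1-z) = w(z)^\beta H(w(z))$, and applying the analytic inverse function theorem to $G(w) := w\, H(w)^{1/\beta}$ (with $G(0)=0$ and $G'(0) = H(0)^{1/\beta} > 0$) yields a unique analytic solution $w(z) = G^{-1}(\zeta)$ in terms of $\zeta := (q a_{j_1}(1-z))^{1/\beta}$. Because the principal determination of $(1-z)^{1/\beta}$ is single-valued on $\Delta(R,\theta)$ for any $\theta\in(0,\pi/2)$, this furnishes the announced analytic continuation of $\tilde{A}$, and hence of $\varphi(-\log(1-z))$, for a suitable $R = R(\theta) > 1$.

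For the quantitative estimate, I would rewrite $\varphi + 1$ via Corollary~\ref{C:Mell} as
\[
\varphi(t) + 1 = (1-q)\, A'(t) \sum_j \frac{\nu(j)\, a_j}{1 - q a_j A(t)},
\]
and pass to the $z$ variable, separating the $j = j_1$ term:
\[
\varphi(-\log(1-z)) + 1 = \frac{(1-q)(1-z)}{\Pi_{\boldsymbol{a}}(q\tilde{A}(z))} \left( \frac{\nu(j_1) a_{j_1}}{w(z)} + R(z) \right),
\]
where $R(z)$ gathers the $j \neq j_1$ contributions and stays analytic and bounded near $z=1$. Substituting $\Pi_{\boldsymbol{a}}(q\tilde{A}(z)) = w(z)^{\beta-1}\, h((1-w(z))/a_{j_1})$ and eliminating $(1-z)/w(z)^\beta$ via $w(z)^\beta H(w(z)) = q a_{j_1}(1-z)$, the principal term collapses to the analytic function $(1-q)\nu(j_1)\, H(w(z)) / \bigl( q\, h((1-w(z))/a_{j_1}) \bigr)$, whose value at $w = 0$ is $(1-q)\nu(j_1)/(q\beta) = (\beta-1)/\beta = 1 - 1/\beta$. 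The secondary term contributes $O\bigl((1-z)/w(z)^{\beta-1}\bigr) = O(|1-z|^{1/\beta})$, and the deviation of the principal term from its limit is also $O(w(z)) = O(|1-z|^{1/\beta})$. Adding these yields $\varphi(-\log(1-z)) + 1/\beta = O(|1-z|^{1/\beta})$ uniformly on $\Delta(R,\theta)$.

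The main technical hurdle is the analytic-inversion step: certifying that the local Puiseux solution $w(z) = G^{-1}((q a_{j_1}(1-z))^{1/\beta})$ propagates to a full domain $\Delta(R,\theta)$ with $R>1$, uniformly in $\theta$, requires combining the local inverse function theorem with a continuation argument that uses the strict monotonicity of $I_{\boldsymbol{a}}$ on $[0, 1/a_{j_1}]$ and the absence of further singularities of $\Pi_{\boldsymbol{a}}$ there. Once this extension is secured, the cancellations producing the precise constant $1 - 1/\beta$ and the error exponent $1/\beta$ follow directly from the factorisation $q - I_{\boldsymbol{a}}(y) = w^\beta H(w)$.
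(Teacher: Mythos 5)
Your local analysis near $z=1$ mirrors the paper's: both factor $\Pi_{\boldsymbol{a}}(y) = (1-a_{j_1}y)^{\beta-1}\Pi^*_{\boldsymbol{a}}(y)$, integrate to get $q - I_{\boldsymbol{a}}(y) = w^\beta H(w)$ with $w = 1-a_{j_1}y$ and $H(0)>0$ (this is the paper's Lemma~\ref{L5}), and then invert locally to exhibit the Puiseux-type singularity that produces the constant $1-1/\beta$ and the error exponent $1/\beta$. Your quantitative computation is essentially correct; there is a small bookkeeping slip (you write $qa_{j_1}(1-z)=w^\beta H(w)$, but since $I_{\boldsymbol{a}}(q\tilde A(z))=qz$ by Proposition~\ref{P1}, the correct relation is $q(1-z)=w^\beta H(w)$) which happens to be self-compensating in your final evaluation at $w=0$, so the stated constants come out right.

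The genuine gap is the one you flag at the end and then leave open: extending the analytic continuation from a small punctured sector around $z=1$ to a full $\Delta(R,\theta)$ with $R>1$. The local inverse-function argument only controls $w(z)=G^{-1}(\zeta)$ for $\zeta$ in a small disc, hence $z$ close to $1$; but the transfer theorem you want to invoke in Section~6 demands analyticity on the whole dented disc of radius $R>1$, including points with $\Re(z)<0$ or $|z|>1$ far from the cut $[1,\infty)$. Your one-line suggestion (monotonicity of $I_{\boldsymbol{a}}$ on $[0,1/a_{j_1}]$ plus absence of further singularities of $\Pi_{\boldsymbol{a}}$) does not settle this: showing that $I_{\boldsymbol{a}}$ is a biholomorphism from a suitable domain onto the half-plane $\{\Re(w)<q\}$ requires a genuine global argument. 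This is precisely the content of the paper's Lemma~\ref{L:bij}, proved via the flow of the ODE $z'(t)=i/\Pi_{\boldsymbol{a}}(z(t))$ together with a careful control of $\arg_\Pi$ to rule out escape towards the cut. Without that step (or a substitute), the analytic continuation to $\Delta(R,\theta)$ — and hence the first assertion of the proposition — is not established.
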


 The most of rest of this section is devoted to the proof of Proposition~\ref{P2}; we will also present some direct consequences at the end.  The assumptions \eqref{E:crit} and \eqref{E:j_1} being implicitly enforced. We further write
 $$a_{j_2}\coloneqq \max\{a_j: j\in \mathrm{Supp}(\boldsymbol{\nu}), j\neq j_1\} < a_{j_1}.$$
Introduce the complex planes slitted along the real half-line $[1/ a_{j_i}, \infty)$,
$$\C_i\coloneqq \{z\in \C: \Im z \neq 0 \text{ or }
\Re z < 1/ a_{j_i}\}, \qquad i=1,2.$$
We now view
$\Pi_{\boldsymbol{a}}$ defined in \eqref{E:pilambda} as a holomorphic function on $\C_1$ that has no zeros on this domain.
Recalling \eqref{E:Ilambda}, we also write  $I_{\boldsymbol{a}}$ for its primitive on $\C_1$ with $I_{\boldsymbol{a}}(0)=0$. The following elementary facts are the keys of our analysis.

\begin{lemma} \label{L5} (i)
The function
$$\frac{q - I_{\boldsymbol{a}}(z)}{q\Pi_{\boldsymbol{a}}(z)}, \qquad z \in \C_1$$
can be extended analytically to $\C_2$. Moreover one has
$$\frac{q - I_{\boldsymbol{a}}(z)}{q\Pi_{\boldsymbol{a}}(z)} \sim \frac{ 1-z a_{j_1}}{a_{j_1}(q+(1-q)\nu(j_1))}
\qquad\text{as } z\to 1/a_{j_1}.$$

(ii) For some $\epsilon>0$ sufficiently small and any acute  angle $\theta\in (0,\pi/2)$,  there is an  open set $\Theta\subset \C_1$ that contains a small open real segment with right extremity $1/a_{j_1}$, such that  the restriction
$$\frac{1}{q}I_{\boldsymbol{a}}: \Theta \to \{z\in \C: 0< |z-1| <\epsilon \text{ and }|\mathrm{Arg}(z-1)| >\theta \}$$
is bijective.
\end{lemma}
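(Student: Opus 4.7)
The plan is to extract the branch behaviour at $z=1/a_{j_1}$ by the local change of variable $w\coloneqq 1-za_{j_1}$, and then exploit a clean cancellation. Setting $\alpha_1\coloneqq \nu(j_1)(1-q)/q$ (so that $\alpha_1+1=\beta$), I would first isolate the distinguished factor of $\Pi_{\boldsymbol{a}}$ as
\[
\Pi_{\boldsymbol{a}}(z)=w^{\alpha_1}h(w),\qquad h(w)\coloneqq \prod_{j\neq j_1}\bigl(1-(1-w)a_j/a_{j_1}\bigr)^{\nu(j)(1-q)/q},
\]
where $w^{\alpha_1}$ is the principal branch (holomorphic for $w\in\C\setminus(-\infty,0]$, which pulls back to $z\in\C_1$), while $h$ is holomorphic and non-vanishing on the larger domain $\C\setminus(-\infty,1-a_{j_1}/a_{j_2}]$, and in particular on the disk $|w|<a_{j_1}/a_{j_2}-1$. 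Expanding $h(w)=\sum_{k\geq 0}c_k w^k$ and integrating term by term, with the critical hypothesis $i_{\boldsymbol{a}}=q$ killing the constant of integration, I obtain
\[
q-I_{\boldsymbol{a}}(z)=\frac{1}{a_{j_1}}\int_0^w v^{\alpha_1}h(v)\dd v=\frac{w^{\alpha_1+1}}{a_{j_1}}R(w),\qquad R(w)\coloneqq\sum_{k\geq 0}\frac{c_k}{\alpha_1+k+1}w^k,
\]
with $R$ holomorphic on the same disk.

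For part (i), I would then observe the branch-free quotient
\[
\frac{q-I_{\boldsymbol{a}}(z)}{q\,\Pi_{\boldsymbol{a}}(z)}=\frac{w\,R(w)}{q\,a_{j_1}\,h(w)},
\]
in which the singular factor $w^{\alpha_1}$ has cancelled. The right-hand side is a holomorphic function of $w$ on the disk $|w|<a_{j_1}/a_{j_2}-1$, and pulling back gives a holomorphic extension of our function to the $z$-disk about $1/a_{j_1}$ of radius $1/a_{j_2}-1/a_{j_1}$. This disk together with $\C_1$ covers $\C_2$, so the extension to $\C_2$ follows. The asymptotic as $z\to 1/a_{j_1}$ is immediate from $R(0)=h(0)/(\alpha_1+1)$: the quotient is equivalent to $w/(qa_{j_1}(\alpha_1+1))$ at leading order, and $q(\alpha_1+1)=q+(1-q)\nu(j_1)$ matches the stated constant.

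For part (ii), I would use the same factorisation to write
\[
\frac{I_{\boldsymbol{a}}(z)}{q}-1=-C'\,w^{\beta}\,\Psi(w),\qquad C'\coloneqq\frac{h(0)}{qa_{j_1}\beta}>0,\quad \Psi(w)\coloneqq\beta R(w)/h(0),
\]
with $\Psi$ holomorphic near $w=0$ and $\Psi(0)=1$. The substitution $\xi\coloneqq w\,\Psi(w)^{1/\beta}$ (principal root, well-defined because $\Psi(0)=1$) is a local biholomorphism fixing $0$ with derivative $1$, in which the map takes the transparent form $I_{\boldsymbol{a}}/q-1=-C'\xi^{\beta}$. A direct angle computation then shows that, with the principal branch of $\xi^\beta$, the map $\xi\mapsto -C'\xi^{\beta}$ sends the open sector
\[
S_r\coloneqq\bigl\{\xi:\ 0<|\xi|<r,\ |\arg\xi|<(\pi-\theta)/\beta\bigr\}
\]
bijectively onto $\{\zeta:\ 0<|\zeta-1|<\epsilon,\ |\arg(\zeta-1)|>\theta\}$, where $\epsilon\coloneqq C'r^{\beta}$, for any sufficiently small $r>0$. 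Taking $\Theta$ to be the image of $S_r$ under $\xi\mapsto w\mapsto z$ gives the desired open set: it lies in $\C_1$ because $(\pi-\theta)/\beta<\pi$ keeps $w$ off the cut $(-\infty,0]$, and it contains a small real segment with right extremity $1/a_{j_1}$ arising from the subinterval $\xi\in(0,r)$.

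The main obstacle is the careful angle bookkeeping in part (ii); part (i) is essentially a clean cancellation of branch factors, whereas part (ii) demands consistent choices of branches so that the inverse-function-theorem perturbation of $\xi\mapsto-C'\xi^{\beta}$ still realises the required bijection onto the Pac-Man target.
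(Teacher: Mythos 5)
Your proof is essentially the paper's own argument, up to the cosmetic change of variable $w=1-za_{j_1}$ and the additional normalization making $\Psi(0)=1$: the paper's expansion $\Pi_{\boldsymbol{a}}(z)=(1-za_{j_1})^{\beta-1}\Pi^*_{\boldsymbol{a}}(z)$, its term-by-term integration $q-I_{\boldsymbol{a}}(z)=h(z)(1-za_{j_1})^\beta$, and its biholomorphism $g(z)=(1-za_{j_1})h(z)^{1/\beta}$ correspond exactly to your $\Pi_{\boldsymbol{a}}=w^{\alpha_1}h(w)$, your $q-I_{\boldsymbol{a}}=w^{\beta}R(w)/a_{j_1}$, and your substitution $\xi=w\Psi(w)^{1/\beta}$. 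Both (i) and (ii) follow the same cancellation-of-branch-factor and sector-mapping strategy as the paper.
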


\begin{figure}[h]
\centering
\begin{tikzpicture}
\draw[->] (0,0) -- (4,0);
\draw[->] (.5,-1) -- (.5,1.5);
\draw (.5,0) node[below left] {0};
\draw (.6,1) -- (.4,1) node[left] {$i$};
\draw (1.5,.1)--(1.5,-.1) ;
\draw (1.5,0)node[below left] {1};

\draw[thick] (2.3,-.2) arc (-90:90:0.2) -- (1.8,.2) arc (90:270:.2) -- cycle;
\draw (2.05,.5) node {$\Theta$};
\draw[very thick, color=black!50] (2.5,0) -- (2.1,0);
\draw (2.5,0) node {$\times$} node[above right] {$\frac{1}{a_{j_1}}$};
\draw (3.5,0) node{$\times$} node[above] {$\frac{1}{a_{j_2}}$};

\draw[->] (4.5,0) -- (5.5,0) node[above] {$\frac{1}{q}I_{\boldsymbol{a}}$} -- (6.5,0);

\draw[->] (7,0) -- (10,0);
\draw[->] (8,-1) -- (8,1.5);
\draw (8,0) node[below left] {0};
\draw (8.1,1) -- (7.9,1) node[left] {$i$};
\draw (9,.1)--(9,-.1) ;
\draw (9,0)node[below left] {1};
\draw[thick] (9,0) -- (9.4243,0.4243) arc (45:315:0.6) -- cycle;

\draw (9.4,0) node[above right] {$\theta$} arc (0:45:0.4);

\end{tikzpicture}
\caption{{Representation of the domain $\Theta$ and its image via the function $\frac{1}{q}I_{\boldsymbol{a}}$.}}
\end{figure}
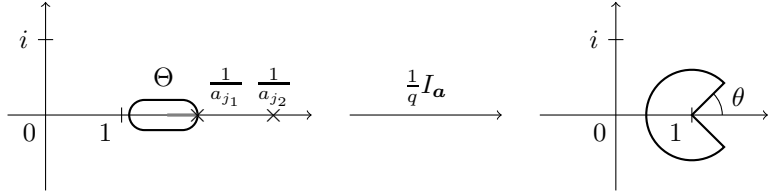

\begin{proof}  We first  make some observations which will be used for establishing each statement.
  Denote the open disk in $\C$ centered at  $1/a_{j_1}$ with radius $ r_1\coloneqq 1/{a_{j_2}}-1/{a_{j_1}}$ by $D(1/a_{j_1},r_1)$, so that $\C_2=\C_1\cup D(1/a_{j_1},r_1)$.

Using the notation  \eqref{E:beta}, we
express  the function $\Pi_{\boldsymbol{a}}$ as the product
$$\Pi_{\boldsymbol{a}}(z)=(1-z a_{j_1})^{\beta-1} \times \Pi^*_{\boldsymbol{a}}(z),$$
where
$$ \Pi^*_{\boldsymbol{a}}(z)\coloneqq   \prod_{j\neq j_1}(1-z{a}_j)^{\nu(j)(1-q) /q}
$$
is a holomorphic function on $\C_2$ which has no zeros there.
Thus $ \Pi^*_{\boldsymbol{a}}$ is  given in the neighborhood of $1/a_{j_1}$ by
$$ \Pi^*_{\boldsymbol{a}}(z)=\sum_{n=0}^{\infty} b_n (1-z a_{j_1})^n, \qquad z\in  D(1/a_{j_1},r_1),$$
for some sequence $(b_n)_{n\geq 0}$ of real numbers with
$$b_0 =   \Pi^*_{\boldsymbol{a}}(1/a_{j_1}) >0\quad\text{and} \quad \limsup_{n\to \infty} |b_n|^{1/n} \leq 1/r_1.$$

On the other hand, we have from the criticality assumption \eqref{E:crit} that
$$q-I_{\boldsymbol{a}}(z) =\int_{[z,1/ a_{j_1}]} \Pi_{\boldsymbol{a}}(z') \dd z',
\qquad z\in \C_1,$$
where we wrote $[z,1/ a_{j_1}]$ for the segment from  $z$ to  $1/ a_{j_1}$, which stays in  $ \C_1$ except for its right-extremity.

(i)  It follows from the observations above that for any $z\in D(1/a_{j_1},r_1)$,
\begin{align}  \label{E:estI}
q-I_{\boldsymbol{a}}(z) &= \sum_{n=0}^{\infty} b_n \int_{[z,1/ a_{j_1}]} (1-z' a_{j_1})^{n+(1-q)\nu(j_1)/q} \dd z'  \nonumber \\
&=  h(z)(1-za_{j_1})^\beta,
 \end{align}
 where
 $$h(z)\coloneqq \frac{1}{a_{j_1}}
\sum_{n=0}^{\infty}
   \frac{b_n }{1+{n+(1-q)\nu(j_1)/q}} (1-za_{j_1})^{n}$$
is a holomorphic function on $D(1/a_{j_1},r_1)$.

We can now write
$$\frac{q - I_{\boldsymbol{a}}(z)}{q\Pi_{\boldsymbol{a}}(z)}= (1-za_{j_1})\frac{ h(z)}{q \Pi^*_{\boldsymbol{a}}(z)}, \qquad z\in \C_1 \cap D(1/a_{j_1},r_1),$$
which immediately entails the two claims in (i).

 (ii) Recall \eqref{E:estI}.
Since $h(1/a_{j_1})$ is a strictly positive real number,  we can pick $r_2>0$ sufficiently small such that
$g: z\mapsto (1-za_{j_1})h(z)^{1/\beta}$ is a well-defined holomorphic function on the disk $D(a_{j_1},r_2)$. Furthermore, since $g(1/a_{j_1})=0$ and $g'(1/a_{j_1})\neq 0$, $g$ is  injective provided that $r_2$ has been chosen sufficiently small, and
 its image contains a small disk around the origin and radius, say  $r_3>0$.
Take an arbitrary angle $\alpha\in(0,\pi)$, and let
$$ \Theta\coloneqq \{g^{-1}(z'): 0<|z'| < r_3 \hbox{ and } |\mathrm{Arg}(z')|<\alpha/\beta\}.$$
Since the real function  $q - I_{\boldsymbol{a}}: [0,1/a_{j_1}] \to [0,q]$ is decreasing bijection,
$\Theta$ contains a real open segment with right-extremity $1/a_{j_1}$. The claim (ii)
is seen from the identity $q- I_{\boldsymbol{a}}(z)= g(z)^{\beta}$ for all $z\in
\Theta$, taking $\theta=\pi-\alpha$.
\end{proof}

We will also need the following.

\begin{lemma} \label{L:bij}
There is a domain $\Theta'$ with
$$(-\infty, 1/a_{j_1})\subset \Theta' \subset\{z: \Re(z)<1/a_{j_1}\},$$
such that the function
$$I_{\boldsymbol{a}}: \Theta' \to \{w\in \C: \Re(w) <q\}$$
is bijective.
\end{lemma}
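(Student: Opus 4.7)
The strategy is to construct $\Theta'$ as the image of a holomorphic extension of the real inverse $\phi_0 := I_{\boldsymbol{a}}^{-1}: (-\infty, q) \to (-\infty, 1/a_{j_1})$. The first thing to check is that $\phi_0$ is well defined: under the criticality hypothesis $i_{\boldsymbol{a}} = q$, the function $I_{\boldsymbol{a}}$ is a real-analytic strictly increasing bijection between these intervals, since $\Pi_{\boldsymbol{a}} > 0$ on $(-\infty, 1/a_{j_1})$, the asymptotic $\Pi_{\boldsymbol{a}}(y) \sim C|y|^{(1-q)/q}$ as $y \to -\infty$ forces $I_{\boldsymbol{a}}(y) \to -\infty$, and $I_{\boldsymbol{a}}(1/a_{j_1}^{-}) = q$ by definition of criticality.

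The main analytic work is to extend $\phi_0$ holomorphically to all of $U := \{\Re(w) < q\}$ while keeping its image inside $\{\Re(z) < 1/a_{j_1}\}$. I would do this by analytic continuation, using the autonomous first-order ODE $\phi'(w) = 1/\Pi_{\boldsymbol{a}}(\phi(w))$, whose right-hand side is holomorphic and nonvanishing on the half-plane $\{\Re(z) < 1/a_{j_1}\} \subset \C_1$. Let $D \subset U$ be the maximal open connected set containing $(-\infty, q)$ on which the ODE solution $\phi$ (with initial data $\phi_0$ on the real axis) exists and satisfies $\Re(\phi(w)) < 1/a_{j_1}$. Schwarz reflection then gives $\phi(\overline w) = \overline{\phi(w)}$, and the identity $I_{\boldsymbol{a}}(\phi(w)) = w$, propagated from the real line by analytic continuation, serves as a first integral on $D$.

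The principal obstacle is to show $D = U$. At a putative boundary point $w^\star \in \partial D \cap U$, the solution $\phi$ must either diverge---which is ruled out by the polynomial growth of $I_{\boldsymbol{a}}(z)$ of order $1/q$ as $|z|\to\infty$ in $\C_1$, combined with the boundedness of $I_{\boldsymbol{a}}(\phi(w)) = w$ near $w^\star$---or cluster on the vertical line $\{\Re(z) = 1/a_{j_1}\}$, placing $w^\star$ in the closure of its image under $I_{\boldsymbol{a}}$. Analyzing this image via the formula $I_{\boldsymbol{a}}(1/a_{j_1}+it) - q = i\int_0^t \Pi_{\boldsymbol{a}}(1/a_{j_1}+iu)\,\dd u$, the local expansion of $\Pi_{\boldsymbol{a}}$ near $1/a_{j_1}$ supplied by Lemma~\ref{L5}, and Schwarz symmetry, one identifies it as a curve in $\C$ that touches $\overline U$ only at the single point $w = q$. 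This geometric step is the most delicate part of the proof and draws on the global structure of $\Pi_{\boldsymbol{a}}$.

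Once $D = U$ is established, set $\Theta' := \phi(U)$. It is an open connected subset of $\{\Re(z) < 1/a_{j_1}\}$ containing $(-\infty, 1/a_{j_1}) = \phi_0((-\infty, q))$, because $\phi$ is a local biholomorphism (its derivative $1/\Pi_{\boldsymbol{a}}(\phi)$ is nonzero on $D$) on a connected open set. Bijectivity of $I_{\boldsymbol{a}}: \Theta' \to U$ follows formally from the first integral $I_{\boldsymbol{a}} \circ \phi = \mathrm{id}_U$: surjectivity is immediate, and $I_{\boldsymbol{a}}(\phi(w_1)) = I_{\boldsymbol{a}}(\phi(w_2))$ forces $w_1 = w_2$, hence $\phi(w_1) = \phi(w_2)$.
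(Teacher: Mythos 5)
Your overall strategy is closely related to the paper's: both of you are analytically continuing the inverse $I_{\boldsymbol{a}}^{-1}$ to the half-plane $U = \{\Re(w)<q\}$ via the ODE $\phi' = 1/\Pi_{\boldsymbol{a}}(\phi)$, the paper parametrizing this by real time along vertical characteristics $w = I_{\boldsymbol{a}}(x)+it$, you by a 2D continuation argument. However, your proof has a genuine gap in the boundary analysis.

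The gap is in the claim that the image of the vertical line $\{\Re(z)=1/a_{j_1}\}$ under $I_{\boldsymbol{a}}$ touches $\bar U$ only at $w=q$. This is false as soon as $\beta-1 = \nu(j_1)(1-q)/q > 2$. To see why, expand near the tip: for $z = 1/a_{j_1}+iu$ with $u>0$ small one has
$$\Pi_{\boldsymbol{a}}(z) = (1-za_{j_1})^{\beta-1}\Pi^*_{\boldsymbol{a}}(z) \approx (ua_{j_1})^{\beta-1}\,\e^{-i\pi(\beta-1)/2}\,\Pi^*_{\boldsymbol{a}}(1/a_{j_1}),$$
with $\Pi^*_{\boldsymbol{a}}(1/a_{j_1})>0$. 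Hence $\Im\,\Pi_{\boldsymbol{a}}(1/a_{j_1}+iu) \approx -(ua_{j_1})^{\beta-1}\Pi^*_{\boldsymbol{a}}(1/a_{j_1})\sin\left(\tfrac{\pi(\beta-1)}{2}\right)$, which is \emph{positive} when $\beta-1\in(2,4)$. Since
$$\Re\bigl(I_{\boldsymbol{a}}(1/a_{j_1}+it)\bigr) - q = -\int_0^t \Im\,\Pi_{\boldsymbol{a}}(1/a_{j_1}+iu)\,\dd u,$$
the image then dips strictly into $U$ for $t>0$ small. Consequently you cannot rule out that a cluster point of $\phi$ on the vertical line gets mapped into $U$, and the argument "$w^\star$ lies outside $U$" breaks down. (This is not merely a local glitch: for $q<1/3$ and $\nu(j_1)$ close to $1$ the condition $\beta-1>2$ is met, while the lemma must hold for all $q\in(0,1)$.)

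The paper sidesteps this issue entirely. Instead of controlling the image of the whole vertical boundary, it shows that the flow $z(t)$ started at a real $x<1/a_{j_1}$ has $\Re z'(t)<0$ for $t>0$ (and $>0$ for $t<0$), which forces $\Re z(t)\leq x$ for all times. The trajectory therefore never approaches the line $\{\Re z = 1/a_{j_1}\}$; the delicate behaviour of $\Pi_{\boldsymbol{a}}$ near the branch point $1/a_{j_1}$, where the argument can wind past $-\pi$, is simply never encountered. The monotonicity of $\Re z(t)$ is obtained by bootstrapping a bound on the continuous argument $\arg_\Pi(z(t))\in(-\pi/2,0)$: if $\arg_\Pi$ first reached $-\pi/2$ at $t_0$ then $z'(t_0)$ would be a negative real, and a short computation of $\frac{\dd}{\dd t}\arg(1/a_j - z(t))$ at $t_0$ contradicts minimality of $t_0$. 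To repair your proof you would need to supply this kind of trajectory-confinement argument in place of the false geometric claim about the image of the vertical line.
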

\begin{proof}
We consider the differential equation
\begin{equation}\label{E:z'(t)}
z'(t) = i/\Pi_{\boldsymbol{a}}(z(t)),
\end{equation}
our main goal is to check that for any initial condition $z(0)=x$ in $ (-\infty, 1/a_{j_1})$, \eqref{E:z'(t)}
has a well-defined solution  for all times $t\in\R$.
In this direction, note first that the function $i/\Pi_{\boldsymbol{a}}$ is globally Lipschitz-continuous on any domain in $\C_1$ whose distance to the half-line $[1/a_{j_1}, \infty)$ is strictly positive. Thanks to the Cauchy-Lipschitz theorem,  the existence of a global solution  to \eqref{E:z'(t)} defined for all $\in \R$ will be granted if  we can check that  for any local solution,
$\Re(z'(t))$ has the opposite sign of $t$. Indeed,
the function $t\mapsto \Re(z(t))$ is then non-decreasing  on $(-\infty,0]$ and non-increasing on $[0,\infty)$,  and hence the distance of $z(t)$ to the half-line $[1/a_{j_1}, \infty)$ is never less than $1/a_{j_1}-x>0$.
For the sake of simplicity we focus on the case when $t>0$ in the sequel, as the case when $t<0$ follows from a similar argument.

In this direction, we consider the continuous determination of the argument of $\Pi_{\boldsymbol{a}}$ on $\C_1$, which we denote by $\arg_{\Pi}$. Explicitly, writing $\arg(z)=\Im(\log(z))\in (-\pi,\pi)$ for the argument of $z\in \C\backslash \R_-$, we have
$$\arg_{\Pi}(z)= \frac{1-q}{q} \sum_{j}  \nu(a_j)\arg(1/a_j - z).$$
Note that $\arg_{\Pi}$ vanishes on $(-\infty,1/a_{j_1})$
and  that $\arg_{\Pi}(z)<0$ for all $z\in \C_1$ with $\Im(z)>0$.  By \eqref{E:z'(t)}, the function $t\mapsto \pi/2- \arg_{\Pi}(z(t))$ is the continuous determination of the argument of $z'(t)$,  so in order to check that
$\Re(z'(t))< 0$, it  suffices to prove that $\arg_{\Pi}(z(t))$ remains bounded from above by $0$ and from below by $-\pi/2$ for any $t>0$.

That  $\arg_{\Pi}(z(t))$ is never $0$ for any $t>0$ is clear, since the  zero set of  $\arg_{\Pi}$ is the  real half-line $(-\infty,1/a_{j_1})$. Assume that $\arg_{\Pi}(z(t))=-\pi/2$ for some $t>0$, and let $t_0>0$ be the smallest of such times.
Then  $\arg (z'(t_0))=-\pi$, that is the derivative $z'(t_0)$ is a strictly negative real number.
Since $\Re(z(t_0))<1/a_j$ for all $j\in \mathrm{Supp}(\boldsymbol{\nu})$, it follows that
$\arg(1/a_j - z(t))>\arg(1/a_j - z(t_0))$ for all $t<t_0$ sufficiently close to $t_0$, and \textit{a fortiori}
$\arg_{\Pi}(z(t))>\arg_{\Pi}(z(t_0))=-\pi/2$ for such $t$'s. This contradicts our assumption, since then there woud exist $t<t_0$ with $\arg_{\Pi}(z(t))=-\pi/2$ . Thus $\arg (z'(t))\in [\pi/2,\pi]$ for all $t\geq 0$, and we conclude that \eqref{E:z'(t)} with arbitrary initial condition  $z(0)=x\in(-\infty,1/a_{j_1})$
 has indeed a solution for all times $t\in \R$.

 Our motivation for introducing \eqref{E:z'(t)} is that its integrated version can be expressed as
\begin{equation} \label{E:z(t)}
I_{\boldsymbol{a}}(z(t))=I_{\boldsymbol{a}}(x)+i t .
\end{equation}
Since
plainly, the function $ I_{\boldsymbol{a}}$ is bijective from
$(-\infty, 1/a_{j_1})$ to $(-\infty, q)$,
we now see that the function from $(-\infty,1/a_{j_1})\times \R$ to $\C_1$ which maps a pair $(x,t)$ to the value $z(t)$ of solution to \eqref{E:z'(t)} at time $t$ for the  initial condition $x$, is injective.
Moreover, again by an argument of Lipschitz-continuity, this function is also continuous,
and we denote its range by
$$\Theta'\coloneqq \{z(t): t\in \R \text{ and }z(0)=x\in (-\infty, 1/a_{j_1})\}.$$
The differential flow \eqref{E:z'(t)} thus induces a bijection between
$(-\infty,1/a_{j_1})\times \R$ and $\Theta'$.
For any  $w\in \C$ with $\Re(w)<q$,
we can choose $x\in (-\infty, 1/a_{j_1})$ such that $I_{\boldsymbol{a}}(x)=\Re(w)$ and $t=\Im(w)$,
and then $I_{\boldsymbol{a}}(z(t))=w$. We conclude from \eqref{E:z(t)} that
the map $ I_{\boldsymbol{a}}: \Theta'\to \{w\in \C: \Re(w)<q\}$ is bijective.
\end{proof}

It is now an easy matter to establish Proposition~\ref{P2} by combining Lemmas~\ref{L5} and \ref{L:bij} with the expressions obtained in the preceding section.

\begin{proof}[Proof of Proposition~\ref{P2}] To start with, we use \eqref{E:H'} to write
$$\varphi(t) = \frac{A''(t)}{A'(t)}= H'(A(t))= H(A(t))F(A(t))-1, \qquad 0\leq t \leq A(\infty)=1/(qa_{j_1}).$$
Applying  Proposition~\ref{P1} at the second line below yields for $x\in[0,1)$
\begin{align*} \varphi(-\log(1-x))&=
 H(A(-\log(1-x)))F(A(-\log(1-x)))-1\\
&=
H(I^{-1}_{\boldsymbol{a}}(qx)/q))F(I^{-1}_{\boldsymbol{a}}(qx)/q))-1.
\end{align*}

On the one hand, recall that  the left-hand side  above defines a holomorphic function on the open unit disk $ \mathbb{D}$. On the other hand,  we know from Lemma \ref{L:bij} that
$I^{-1}_{\boldsymbol{a}}$ can be extended analytically to the half-plane $\{w\in \C: \Re(w)<q\}$ and then takes values in $\{z\in \C: \Re(z) < 1/a_{j_1}\}\subset \C_1$.
Moreover,  we see from \eqref{E:F} and  \eqref{E:H} that $HF$ is  holomorphic on $\{z\in \C: q a_{j_1}\Re(z)<1\}$, and we conclude that the function $z\mapsto\varphi(-\log(1-z))$ can be extended analytically to $\{z\in \C: \Re(z)<1\}$.

Similarly, we know from
Lemma~\ref{L5}(ii)
that, given an acute angle $\theta\in(0,\pi/2)$, any $z$  with $|\mathrm{Arg}(z-1)|>\theta$ and $|z-1|$ sufficiently small, can be given in the form $z=1-I_{\boldsymbol{a}}(z')/q$
for a unique $z'\in \Theta$. On the other hand, we infer from \eqref{E:F}, \eqref{E:H} and Lemma~\ref{L5}(ii) that the function $z'\mapsto H(z'/q)F(z'/q)$ is  holomorphic on $\Theta$, and since $\Theta$ contains an open real segment at the right of $1/a_{j_1}$, we deduce by uniqueness of holomorphic extensions that there is the identity
$$\varphi(-\log(1-I_{\boldsymbol{a}}(z')/q))
= H(z'/q)F(z'/q)-1, \qquad z'\in \Theta.
$$
This establishes the first claim, since we can choose $R>1$ sufficiently close to $1$ such that the domain $\Delta(R,\theta)$ is contained into the union
$$ \{z\in \C: \Re(z)< 1\} \cup  \{z\in \C: 0< |z-1| <\epsilon \text{ and }|\mathrm{Arg}(z-1)| >\theta \} .$$

Finally, we express the right-hand side above using  \eqref{E:F} and \eqref{E:H} as
$$(1-q) \frac{ q- I_{\boldsymbol{a}}(z')}{q\Pi_{\boldsymbol{a}}(z')}\sum_{j} \frac{ {a}_j \nu(j)}{1- z' {a}_j} -1= -\frac{ q}{q+(1-q)\nu(j_1)} + O(|1-z'a_{j_1}|),$$
where the identity stems from Lemma~\ref{L5}(i). Since, thanks to \eqref{E:estI},
$$|1-I_{\boldsymbol{a}}(z')/q|  \asymp |z'a_{j_1}-1|^{\beta} \qquad \text{as }z'\to 1/a_{j_1} \text{ and } z'\in \Theta,
$$
where the notation $f \asymp g$ means that both $f=O(g)$ and $g= O(f)$,
the proof is complete.
\end{proof}

 \section{Proof of Theorem~\ref{T1}}
We now take
$$a_j\coloneqq cj, \qquad j\in \mathrm{Supp}(\boldsymbol{\nu}),$$
where $c>0$ will be chosen later on. Plainly, \eqref{E:j_1} holds with $j_1=\mathrm{k}^*$ and  $a_{j_1}= c\mathrm{k}^*$.
Furthermore, comparing \eqref{E:pi} and \eqref{E:pilambda}, we see that
$$\Pi_{\boldsymbol{a}}(t)= \Pi(ct), \qquad 0\leq t \leq 1/{a_{j_1}},$$
and then \eqref{E:Ilambda} reads
$$I_{\boldsymbol{a}}(1/{a_{j_1}})= \frac{1}{c} \int_0^{1/\mathrm{k}^*} \Pi(t) \dd t.$$
We now choose, in the notation \eqref{E:I},
$$c = 1/m_{\boldsymbol{\nu},q} = \frac{1}{q}\int_0^{1/\mathrm{k}^*} \Pi(t) \dd t,$$
so that \eqref{E:crit} holds.

It is convenient to introduce here the notation $\boldsymbol{\Phi}=(\Phi_{\ell})$, where for $\ell\in \mathrm{Supp}(\boldsymbol{\nu})$,
\begin{equation}\label{E:Phi}\Phi_\ell(z)\coloneqq M_\ell(\boldsymbol{a},-\log(1-z)).
\end{equation}
We set further
$$\Phi_{ \boldsymbol{\nu}} \coloneqq \left\langle \boldsymbol{\nu}; \boldsymbol{\Phi}\right\rangle,$$
so that, by \eqref{E:varphi},
\begin{equation} \label{E:Phiphi}
\Phi_{ \boldsymbol{\nu}} (z)= \frac{\varphi(-\log (1-z))+1}{1-q}.
\end{equation}

Next, recall from Lemma~\ref{L1} that for any $x\in [0,1)$,
\begin{align*}\Phi_{ \boldsymbol{\nu}}(x) &= \EE\left( \prod (cj)^{Y_j(-\log(1-x))}\right) \\
&= (1-x)\sum_{n=1}^{\infty} x^{n-1} c^n\E(Z(n)).
\end{align*}
We have seen that this power series has radius of convergence equal to $1$ and know from Proposition~\ref{P2}  that it defines a holomorphic function on some domain $\Delta(R, \theta)$.

We next re-express the above in the form
\begin{align*}& \frac{\Phi_{ \boldsymbol{\nu}}(z)
-  \nu(j_1)/(q+(1-q)\nu(j_1)) }{1-z}
\\
&= \sum_{n=1}^{\infty} z^{n-1} \left( m_{\boldsymbol{\nu},q}^{-n}\E(Z(n))- \frac{ \nu(j_1)}{q+(1-q)\nu(j_1)}
\right).
\end{align*}

The assumptions of Proposition~\ref{P2} have been checked, and combining its conclusion with \eqref{E:Phiphi}  and the observation from \eqref{E:beta} that
$$ \frac{ \nu(j_1)}{q+(1-q)\nu(j_1)}= \frac{1-1/\beta}{1-q},
 $$
 enables us to apply  to this  generating function a basic transfer theorem of the singularity analysis; see  \cite[Theorem 1]{FlajOdl}, or  \cite[Theorem VI.3]{FlajSed}.
 We get
$$m_{\boldsymbol{\nu},q}^{-n}\E(Z(n))- \frac{ \nu(j_1)}{q+(1-q)\nu(j_1)} = O(n^{-1/\beta}).$$
Recalling that $j_1=\mathrm{k}^*$, this is the claim of Theorem~\ref{T1}.

\section{Three further results}

\subsection{On the dependence on the parameters}
In random population models, the ratio $\E(Z(n+1))/\E(Z(n))$  is sometimes called the effective reproduction number at generation $n$, and Theorem~\ref{T1} thus identifies its limit when $n\to \infty$  for the reinforced Galton-Watson process as $m_{\boldsymbol{\nu},q}$ in \eqref{E:I}. It is natural to study how this quantity depends on the memory parameter $q$ and on the reproduction law $\boldsymbol{\nu}$.

\begin{proposition}\label{P3} (i) The function $q\mapsto m_{\boldsymbol{\nu},q}$ is monotone increasing on $(0,q)$,  with
$$\lim_{q\to 0+} m_{\boldsymbol{\nu},q}= \sum j \nu(j) \quad\text{and}  \quad \lim_{q\to 1-} m_{\boldsymbol{\nu},q}=\mathrm{k}^*.$$

\noindent (ii) Let $q\in(0,1)$ be a fixed memory parameter and $\mathrm{k}^*\geq 1$ a fixed integer.
The function $\boldsymbol{\nu} \mapsto  m_{\boldsymbol{\nu},q}$ is log-concave on the subset of
reproduction laws $\boldsymbol{\nu}$ with
$ \sup\{k\geq 0: \nu(k)>0\}= \mathrm{k}^*$.
\end{proposition}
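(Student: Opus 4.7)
The plan is to handle (ii) first, since it is purely analytic, and then derive (i) by combining Lemma~\ref{L-1} and Theorem~\ref{T1} with a direct analysis of the explicit formula \eqref{E:I}.

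For (ii), I would fix $q\in(0,1)$, set $\alpha=(1-q)/q$, and observe that the map $\boldsymbol{\nu}\mapsto\log\Pi(t)=\alpha\sum_k\nu(k)\log(1-tk)$ is \emph{affine} in $\boldsymbol{\nu}$, so that $\Pi(t)$ itself is log-affine. For any convex combination $\boldsymbol{\nu}=\theta\boldsymbol{\nu}_1+(1-\theta)\boldsymbol{\nu}_2$ of two reproduction laws supported in $\{0,\dots,\mathrm{k}^*\}$ with $\nu_i(\mathrm{k}^*)>0$ (a condition plainly preserved by convex combinations), we have the pointwise identity $\Pi_{\boldsymbol{\nu}}(t)=\Pi_{\boldsymbol{\nu}_1}(t)^\theta\Pi_{\boldsymbol{\nu}_2}(t)^{1-\theta}$. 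H\"older's inequality with exponents $1/\theta$ and $1/(1-\theta)$ applied to the integration over $t\in[0,1/\mathrm{k}^*]$ then yields
$$\int_0^{1/\mathrm{k}^*}\Pi_{\boldsymbol{\nu}}(t)\,\dd t\leq\left(\int_0^{1/\mathrm{k}^*}\Pi_{\boldsymbol{\nu}_1}(t)\,\dd t\right)^{\theta}\left(\int_0^{1/\mathrm{k}^*}\Pi_{\boldsymbol{\nu}_2}(t)\,\dd t\right)^{1-\theta},$$
which is exactly the statement that the denominator in \eqref{E:I} is log-convex in $\boldsymbol{\nu}$. Hence $\log m_{\boldsymbol{\nu},q}=\log q-\log\int\Pi_{\boldsymbol{\nu}}$ is concave, i.e.\ $m_{\boldsymbol{\nu},q}$ is log-concave.

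For the monotonicity claim in (i), weak monotonicity of $q\mapsto m_{\boldsymbol{\nu},q}$ follows immediately from Lemma~\ref{L-1} and Theorem~\ref{T1}: for $q\leq q'$, the coupling argument gives $\E_q(Z(n))\leq\E_{q'}(Z(n))$ for every $n$, and dividing by $n$, taking logarithms, and letting $n\to\infty$ yields $m_{\boldsymbol{\nu},q}\leq m_{\boldsymbol{\nu},q'}$. To upgrade to \emph{strict} monotonicity, I would note that $q\mapsto m_{\boldsymbol{\nu},q}$ is real-analytic on $(0,1)$, as is evident from the formulae \eqref{E:pi}--\eqref{E:I} (the denominator is a smooth function of $q$ that never vanishes, by the remark following \eqref{E:I}): a non-decreasing real-analytic function on a connected interval is either constant or strictly increasing, and non-constancy is guaranteed by the two distinct boundary limits computed below, using that the non-degeneracy of $\boldsymbol{\nu}$ forces $\bar\nu\coloneqq\sum_k k\nu(k)<\mathrm{k}^*$.

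The two limits are a matter of dominated convergence applied to the reparametrised denominator. Set $\alpha=(1-q)/q$ and $I(\alpha)\coloneqq\int_0^{1/\mathrm{k}^*}\e^{\alpha\psi(t)}\,\dd t$ with $\psi(t)\coloneqq\sum_k\nu(k)\log(1-tk)\leq 0$, so that $m_{\boldsymbol{\nu},q}=((1+\alpha)I(\alpha))^{-1}$. As $q\to 1^-$, $\alpha\to 0$ and $\e^{\alpha\psi(t)}\to 1$ pointwise, dominated by $1$; hence $I(\alpha)\to 1/\mathrm{k}^*$ and $m_{\boldsymbol{\nu},q}\to\mathrm{k}^*$. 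As $q\to 0^+$, $\alpha\to\infty$ and the change of variable $u=\alpha t$ gives $\alpha I(\alpha)=\int_0^{\alpha/\mathrm{k}^*}\e^{\alpha\psi(u/\alpha)}\,\dd u$; the elementary bound $\log(1-x)\leq -x$ yields $\psi(t)\leq -t\bar\nu$, so the integrand is dominated by the integrable $\e^{-u\bar\nu}$, while a first-order Taylor expansion shows $\alpha\psi(u/\alpha)\to -u\bar\nu$ pointwise. Dominated convergence then gives $\alpha I(\alpha)\to 1/\bar\nu$, and therefore $m_{\boldsymbol{\nu},q}=(\alpha/(1+\alpha))\cdot(\alpha I(\alpha))^{-1}\to\bar\nu$.

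The only genuine subtlety in this program is the upgrade from weak to strict monotonicity in (i), which is resolved via analyticity together with the distinct boundary limits; the log-concavity in (ii) and the two limit computations are otherwise routine applications of H\"older's inequality and dominated convergence, respectively.
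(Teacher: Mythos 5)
Your proof is correct and follows essentially the same route as the paper: H\"older's inequality applied to the log-affine family $\Pi_{\boldsymbol{\nu}}$ for (ii), Lemma~\ref{L-1} together with Theorem~\ref{T1} for the monotonicity in (i), and a change of variables plus dominated convergence for the two boundary limits (the paper's reparametrisation $1/m_{\boldsymbol{\nu},q}=\int_0^{1/(q\mathrm{k}^*)}\Pi(qt)\,\dd t$ is the same substitution as yours in disguise). The one genuine addition in your write-up is the analyticity argument to pass from weak to strict monotonicity: the paper's coupling argument in Lemma~\ref{L-1} only yields $\E_q(Z(n))\leq\E_{q'}(Z(n))$, hence only $m_{\boldsymbol{\nu},q}\leq m_{\boldsymbol{\nu},q'}$, and the paper leaves the upgrade to strict monotonicity implicit; your observation that a non-decreasing real-analytic function on $(0,1)$ with two distinct boundary limits must be strictly increasing closes that small gap cleanly.
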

\begin{proof} (i)  The monotonicity assertion derives from Lemma~\ref{L-1} and Theorem~\ref{T1}.
Next, we write
\begin{equation} \label{E:1/m}
1/m_{\boldsymbol{\nu},q}= \int_0^{1/(q\mathrm{k}^*)} \Pi(qt)\dd t,
\end{equation}
and note that for every $t\geq 0$,
$$\lim_{q\to 0+} \Pi(qt) = \exp\left(-t\sum j\nu(j)\right).$$
Integrating over $t\geq 0$ yields
$$\lim_{q\to 0+} 1/m_{\boldsymbol{\nu},q} = \left. 1\middle/\sum j \nu(j)\right. .$$
On the other hand,
the function
$$q \mapsto (1-tk)^{\nu(k)(1-q) /q} $$
increases on $(0,1)$. Its limit as $q\to 1-$ equals $1$ for all $t <1/\mathrm{k}^*$, which yields
$m_{\boldsymbol{\nu},1-}=\mathrm{k}^*$.

(ii) Let $\boldsymbol{\nu}_1$ and $\boldsymbol{\nu}_2$ be two reproduction laws
with $ \sup\{k\geq 0: \nu_i(k)>0\}= \mathrm{k}^*$ for $i=1,2$.
Write
$$\Pi_i(t)\coloneqq  \prod (1-tk)^{\nu_i(k)(1-q) /q},  \qquad t\leq 1/\mathrm{k}^* \text{ and }i=1,2.$$
Let $c\in(0,1)$, consider $\boldsymbol{\nu}= c \boldsymbol{\nu}_1+(1-c)\boldsymbol{\nu}_2$.  As $\Pi=\Pi_1^c \times \Pi_2^{1-c}$ in the obvious notation,
we deduce from \eqref{E:1/m} and the H\"older inequality  that
$$m_{\boldsymbol{\nu},q}^{-1} \leq m_{\boldsymbol{\nu}_1,q}^{-c} m_{\boldsymbol{\nu}_2,q}^{c-1}.$$
This shows the log-concavity statement.
\end{proof}

\subsection{Explicit upper and lower bounds}
The purpose of this section is to discuss the following simple bounds for the  asymptotic  effective reproduction number $m_{\boldsymbol{\nu},q}$.

\begin{proposition} \label{P5} We have
$$\mathrm{k}^*(q+(1-q)\nu(\mathrm{k}^*)) \leq m_{\boldsymbol{\nu},q} \leq \mathrm{k}^*q +(1-q) \bar {\boldsymbol{\nu}},$$
where in the right-hand side,
$$\bar {\boldsymbol{\nu}}\coloneqq \sum j \nu(j)$$
denotes the mean of ${\boldsymbol{\nu}}$.
\end{proposition}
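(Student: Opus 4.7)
The plan is to reduce both bounds to elementary estimates on the integral $\int_0^{1/\mathrm{k}^*}\Pi(t)\,dt$, using the formula $m_{\boldsymbol{\nu},q} = q/\int_0^{1/\mathrm{k}^*}\Pi(t)\,dt$ from \eqref{E:I}. Each direction comes from a one-line bound on $\Pi(t)$ together with a direct computation of $\int_0^{1/\mathrm{k}^*}(1-t\mathrm{k}^*)^\alpha\,dt = 1/(\mathrm{k}^*(\alpha+1))$.

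For the \emph{lower bound} $m_{\boldsymbol{\nu},q} \geq \mathrm{k}^*(q+(1-q)\nu(\mathrm{k}^*))$, I would use that every factor in the product defining $\Pi(t)$ lies in $[0,1]$ for $t\in[0,1/\mathrm{k}^*]$. Retaining only the factor corresponding to $k=\mathrm{k}^*$ gives the pointwise bound
$$\Pi(t) \leq (1-t\mathrm{k}^*)^{\nu(\mathrm{k}^*)(1-q)/q}.$$
Integrating over $[0,1/\mathrm{k}^*]$ and setting $\alpha = \nu(\mathrm{k}^*)(1-q)/q$ yields $\int_0^{1/\mathrm{k}^*}\Pi(t)\,dt \leq q/(\mathrm{k}^*(q+(1-q)\nu(\mathrm{k}^*)))$, and taking reciprocals gives the claim.

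For the \emph{upper bound} $m_{\boldsymbol{\nu},q} \leq \mathrm{k}^* q + (1-q)\bar{\boldsymbol{\nu}}$, I would use concavity of $y \mapsto \log(1-y)$ on $[0,1)$, which vanishes at $y=0$. For $c\in[0,1]$, writing $cx = (1-c)\cdot 0 + c\cdot x$ gives $\log(1-cx) \geq c\log(1-x)$. Applied with $x = t\mathrm{k}^*$ and $c = k/\mathrm{k}^*$ for $k\in\mathrm{Supp}(\boldsymbol{\nu})$, this yields $(1-tk) \geq (1-t\mathrm{k}^*)^{k/\mathrm{k}^*}$, so
$$\Pi(t) \geq \prod_k (1-t\mathrm{k}^*)^{\nu(k)(k/\mathrm{k}^*)(1-q)/q} = (1-t\mathrm{k}^*)^{\bar{\boldsymbol{\nu}}(1-q)/(\mathrm{k}^* q)}.$$
Integration and reciprocation then give the stated upper bound.

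There is no real obstacle here: both directions are elementary. The only mild subtlety is identifying the right single-factor comparison for each direction — keeping only the $\mathrm{k}^*$-factor (as an upper bound on $\Pi$) for the lower bound on $m_{\boldsymbol{\nu},q}$, and rewriting $(1-tk)$ in terms of $(1-t\mathrm{k}^*)$ via the concavity inequality (as a lower bound on $\Pi$) for the upper bound on $m_{\boldsymbol{\nu},q}$. Both bounds are tight precisely when $\boldsymbol{\nu}$ is concentrated on $\{0,\mathrm{k}^*\}$, which gives a useful sanity check.
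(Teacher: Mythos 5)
Your argument is correct and coincides with the paper's analytic proof: the lower bound comes from dropping all factors of $\Pi$ except the one at $k=\mathrm{k}^*$, and the upper bound from the pointwise estimate $(1-tk)\geq(1-t\mathrm{k}^*)^{k/\mathrm{k}^*}$, which is exactly the paper's convexity inequality $(1-at)^{1/a}\geq 1-t$ for $a\leq 1$ written in another form. (The paper also records a short probabilistic proof via comparison with two true Galton--Watson processes, but your analytic route is the same as the paper's analytic one.)
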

We shall now give two different proofs of this claim, the first is purely probabilist whereas the second is purely analytic and  based on the formula \eqref{E:I}.

\begin{proof}[A probabilistic proof]
 Consider first the sub-process of the reinforced Galton-Watson process which results by suppressing every sibling (together with its possible descent) which has  size strictly less than $\mathrm{k}^*$. This sub-process is then a true Galton-Watson process in which individuals have either $\mathrm{k}^*$ children with probability $q+(1-q)\nu(\mathrm{k}^*)$ or $0$ child with complementary probability. Therefore $Z(n)$ dominates the size of the $n$-th generation of a Galton-Watson process with averaged reproduction $\mathrm{k}^*(q+(1-q)\nu(\mathrm{k}^*))$, and the lower-bound in the statement follows.

 For the upper-bound consider another Galton-Walton process with the following reproduction law. With probability $q$, the number of children of a typical individual is $\mathrm{k}^*$, and with complementary probability $1-q$, it is given by a random variable with law $\boldsymbol{\nu}$. In words, at each repetition event, we systematically increase the number of children to $\mathrm{k}^*$. Obviously, this second true Galton-Watson process dominates the reinforced Galton-Watson process, and this entails the upper-bound.
 \end{proof}

\begin{proof} [An analytic proof]
For the lower-bound, we simply write for every $0\leq t \leq 1$ the inequality
$$\prod \left(1-tj/\mathrm{k}^*\right)^{(1-q)\nu(j)/q} \leq  \left(1-t\right)^{(1-q)\nu(\mathrm{k}^*)/q}.
$$
By integration, we conclude that
$$\int_0^1\prod \left(1-tj/\mathrm{k}^*\right)^{(1-q)\nu(j)/q} \dd t \leq
 \frac{q}{q+(1-q) \nu(\mathrm{k}^*)},
$$
and now the first inequality of the statement can be seen from \eqref{E:I}.

For the upper-bound, we first write
$$\left(1-tj/\mathrm{k}^*\right)^{(1-q)\nu(j)/q} = \left( \left(1-tj/\mathrm{k}^*\right)^{\mathrm{k}^*/j} \right)^\frac{(1-q)j \nu(j)}{q\mathrm{k}^*} ,$$
and then use the convexity inequality
$$(1-at)^{1/a}\geq 1-t\qquad \text{for }a\leq 1.$$
We get
$$\prod \left(1-tj/\mathrm{k}^*\right)^{(1-q)\nu(j)/q} \geq  \left(1-t\right)^\frac{(1-q)\bar{\boldsymbol{\nu}}}{q\mathrm{k}^*}.
$$
By integration, we get
$$\int_0^1\prod \left(1-tj/\mathrm{k}^*\right)^{(1-q)\nu(j)/q} \dd t \geq
 \frac{q}{q+(1-q) \bar{\boldsymbol{\nu}}/\mathrm{k}^*},
$$
and the second inequality of the statement can again be seen from \eqref{E:I}.
\end{proof}

\subsection{On the dependence on the first offspring number}
We now conclude this work presenting a stronger version of Theorem~\ref{T1}. More precisely, the latter is given for the probability measure $\P$
under which the number of children $Z(1)$ of the ancestor is a random sample of $\boldsymbol{\nu}$. Here is the version under the conditional probabilities $\P_{\ell}=\P(\cdot \mid Z(1)=\ell)$, which again underlines the key role of the  maximal possible number of children $\mathrm{k}^*$.

\begin{theorem}\label{T2} There exists some constant $\gamma\in(0,\infty)$ such that
for every $\ell\in \mathrm{Supp}(\boldsymbol{\nu})$ with $\ell\neq \mathrm{k}^*$, we have
$$\lim_{n\to \infty} n^{1/\beta}  m_{\boldsymbol{\nu},q}^{-n}\E_{\ell} (Z(k)) = \frac{\gamma}{\Gamma(1/\beta)(1/\ell-1/\mathrm{k}^*)}.
$$
We have also
$$\lim_{n\to \infty}  m_{\boldsymbol{\nu},q}^{-n}\E_{\mathrm{k}^*} (Z(k)) = 1/(q+
 \nu(\mathrm{k}^*)(1-q)).$$
\end{theorem}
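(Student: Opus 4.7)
My plan is to carry the singularity-analysis machinery of Sections~4--6 from the aggregated quantity $\varphi$ over to each individual generating function $\Phi_\ell(z) \coloneqq M_\ell(\boldsymbol{a},-\log(1-z))$, keeping the same critical choice $a_j = cj$, $c = 1/m_{\boldsymbol{\nu},q}$, and then applying the Flajolet--Odlyzko transfer theorem coefficient-wise. The starting point is Corollary~\ref{C:Mell} combined with Proposition~\ref{P1}. Writing $w(z) \coloneqq I^{-1}_{\boldsymbol{a}}(qz)$, so that $qA(-\log(1-z)) = w(z)$, and computing $A'(-\log(1-z)) = (1-z)/\Pi_{\boldsymbol{a}}(w(z))$ directly from $A' = H \circ A$ together with \eqref{E:H}, one obtains the master formula
\[
\Phi_\ell(z) \;=\; \frac{a_\ell\,(1-z)}{\Pi_{\boldsymbol{a}}(w(z))\,(1-a_\ell\, w(z))}, \qquad \ell \in \mathrm{Supp}(\boldsymbol{\nu}).
\]
Everything then reduces to a careful local analysis of this expression as $z \to 1$.

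The decisive step uses the factorization $\Pi_{\boldsymbol{a}}(w) = (1-a_{j_1}w)^{\beta-1}\Pi^*_{\boldsymbol{a}}(w)$ together with the identity $q(1-z) = h(w(z))(1-a_{j_1}w(z))^{\beta}$ coming from \eqref{E:estI}, where $h$ and $\Pi^*_{\boldsymbol{a}}$ are the analytic, non-vanishing functions on $D(1/a_{j_1},r_1)$ introduced in the proof of Lemma~\ref{L5}. When $\ell = \mathrm{k}^*$, so $a_\ell = a_{j_1}$, these two relations conspire to cancel the zero of $(1-z)$ in the numerator against the zero of $(1-a_{j_1}w(z))^{\beta}$ in the denominator, leaving $\Phi_{\mathrm{k}^*}(z) = a_{j_1}\, h(w(z))/(q\,\Pi^*_{\boldsymbol{a}}(w(z)))$. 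This expression is holomorphic at $z=1$ and (using $h(1/a_{j_1}) = \Pi^*_{\boldsymbol{a}}(1/a_{j_1})/(a_{j_1}\beta)$) takes the value $1/(q\beta) = 1/(q+(1-q)\nu(\mathrm{k}^*))$. When $\ell \neq \mathrm{k}^*$, the factor $1-a_\ell w(z)$ stays bounded away from zero with limit $1-\ell/\mathrm{k}^*$, only a power $(1-a_{j_1}w(z))^{\beta-1}$ remains in the denominator, and rewriting $a_\ell/(1-\ell/\mathrm{k}^*) = c/(1/\ell - 1/\mathrm{k}^*)$ yields
\[
\Phi_\ell(z) \;\sim\; \frac{\gamma_0\,(1-z)^{1/\beta}}{1/\ell - 1/\mathrm{k}^*} \qquad \text{as } z \to 1,
\]
for a positive constant $\gamma_0$ that depends on $\boldsymbol{\nu}$ and $q$ but \emph{not} on $\ell$. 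The analytic continuation of these expressions to a camembert $\Delta(R,\theta)$ is inherited verbatim from the proof of Proposition~\ref{P2}: $w(z) = I^{-1}_{\boldsymbol{a}}(qz)$ extends by Lemmas~\ref{L:bij} and~\ref{L5}(ii), while $h$ and $\Pi^*_{\boldsymbol{a}}$ are already holomorphic in a neighborhood of $1/a_{j_1}$.

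The Flajolet--Odlyzko transfer theorem (\cite[Theorem~VI.3]{FlajSed}) then finishes the job. For $\ell = \mathrm{k}^*$, applied to $(\Phi_{\mathrm{k}^*}(z) - L)/(1-z)$ with $L = 1/(q+(1-q)\nu(\mathrm{k}^*))$, it yields $m_{\boldsymbol{\nu},q}^{-n}\E_{\mathrm{k}^*}(Z(n)) - L = O(n^{-1/\beta}) \to 0$, proving the second limit. For $\ell \neq \mathrm{k}^*$, applied to $\Phi_\ell(z)/(1-z) \sim \gamma_0(1/\ell - 1/\mathrm{k}^*)^{-1}(1-z)^{1/\beta - 1}$, it gives $n^{1/\beta} m_{\boldsymbol{\nu},q}^{-n}\E_\ell(Z(n)) \to \gamma_0/(\Gamma(1-1/\beta)(1/\ell - 1/\mathrm{k}^*))$, and the stated form of the limit follows on setting $\gamma = \gamma_0\,\Gamma(1/\beta)/\Gamma(1-1/\beta)$. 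The main obstacle is the algebraic bookkeeping around the cancellation at $\ell = \mathrm{k}^*$: the zero of $(1-z)$ in the numerator of the master formula is matched by the zero of $(1-a_{j_1}w(z))^{\beta}$ in the denominator \emph{only} in that case, which is precisely what produces an analytic limit when $\ell = \mathrm{k}^*$ and a $(1-z)^{1/\beta}$ vanishing for every other $\ell$, hence the dichotomy in the statement.
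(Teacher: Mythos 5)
Your proposal is correct and reaches the same conclusions via the same general machinery (Corollary~\ref{C:Mell}, Proposition~\ref{P1}, Lemma~\ref{L5}, and the Flajolet--Odlyzko transfer theorem), but the execution is genuinely different in two respects. First, the paper keeps $A'$ symbolic and extracts its asymptotics by integrating the estimate $\varphi(w) = -1/\beta + O(\e^{-w/\beta})$ from Proposition~\ref{P2}, yielding $A'(w)\sim \gamma\e^{-w/\beta}$ with $\gamma = \exp\bigl(\int_0^\infty(\varphi(t)+1/\beta)\dd t\bigr)$; you instead invert Proposition~\ref{P1} to produce the explicit closed form $A'(-\log(1-z)) = (1-z)/\Pi_{\boldsymbol{a}}(w(z))$ with $w(z) = I^{-1}_{\boldsymbol{a}}(qz)$, and then everything reduces to local algebra around $w = 1/a_{j_1}$ via the factorizations in Lemma~\ref{L5}. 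This is tidier and makes the dichotomy between $\ell = \mathrm{k}^*$ (exact cancellation of the $(1-a_{j_1}w)^{\beta}$ zero against the $(1-z)$ zero) and $\ell \neq \mathrm{k}^*$ (a residual $(1-z)^{1/\beta}$) completely transparent, at the cost of forgoing the clean integral representation of $\gamma$. Second, you establish the $\ell = \mathrm{k}^*$ limit by direct computation at $z=1$, whereas the paper deduces it by writing $\Phi_{\boldsymbol{\nu}} = \sum_j\nu(j)\Phi_j$ and combining the $\ell\neq\mathrm{k}^*$ case with Theorem~\ref{T1}; both work.

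There is one point you gloss over that the paper handles more elegantly: you need $1 - a_\ell w(z) \neq 0$ on the \emph{whole} of $\Delta(R,\theta)$, not merely in a neighborhood of $z=1$. You only remark that this factor ``stays bounded away from zero with limit $1-\ell/\mathrm{k}^*$,'' which is a statement about the limit at $z=1$. The gap is easily filled --- for $\Re(z)<1$, Lemma~\ref{L:bij} puts $w(z)\in\Theta'\subset\{\Re(w)<1/a_{j_1}\}$, hence $\Re(a_\ell w(z))< a_\ell/a_{j_1}<1$ and the denominator cannot vanish; near $z=1$, Lemma~\ref{L5}(ii) confines $w(z)$ to a small neighborhood $\Theta$ of $1/a_{j_1}\neq 1/a_\ell$ --- but it should be said. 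The paper's route is slicker: any zero $z_\ell$ of $1-qa_\ell A(-\log(1-z))$ would force a singularity of the linear combination $\Phi_{\boldsymbol{\nu}}$ (the contributions from other $j$ cannot cancel it since $a_j\neq a_\ell$), contradicting the analyticity established in Proposition~\ref{P2}; this dispenses with any geometric bookkeeping on the range of $w(z)$.
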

\begin{remark}
One can compute the value of $\gamma$  explicitly in terms of the parameters. However, since its expression is not quite simple and chasing the constants in the calculation below would have been a bit boring, we favored simplicity of the argument over a more precise statement.
\end{remark}

\begin{proof} The guiding line of the argument is similar to that for the proof of Theorem~\ref{T1} and will not be repeated. We shall just indicate the main steps of the calculation.
Recall that we focus on the critical case when assumptions \eqref{E:crit} and \eqref{E:j_1} hold.

Our starting point is now the formula for the factorial moment generating function that been obtained in Corollary \ref{C:Mell}. Using the notation \eqref{E:Phi}, this reads
$$\Phi_\ell(z)= \frac{a_\ell A'(-\log(1-z)) }{ 1 - q a_\ell A(-\log(1-z))}.$$
Essentially, we have to check that  this defines an analytic function on $ \Delta(R,\theta)$ and then to determine its asymptotic behavior as $z\to 1$ in $ \Delta(R,\theta)$.

Let us first discuss analyticity; recall that $A'(t)=\exp\left( \int_0^t \varphi(s)\dd s\right)$. Thanks to Proposition~\ref{P2}, both the numerator and the denominator of $\Phi_\ell(z)$ are analytic functions on $ \Delta(R,\theta)$, so all that is needed is to check that the denominator does not vanish there. Assume that there is some $ z_\ell \in \Delta(R,\theta)$ such that
$ q a_\ell A(-\log(1-z_\ell))=1$. Obviously, then $ q a_j A(-\log(1-z_\ell))\neq 1$ for any $j$ with $a_j\neq a_\ell$, and we deduce that $z_\ell$ would also be a singularity of the linear combination  $\Phi_{ \boldsymbol{\nu}}$. This contradicts Proposition~\ref{P2}(ii) and we conclude that $\Phi_\ell$ is indeed analytic on $\Delta(R,\theta)$ for all $\ell$.

Next, we know from Proposition~\ref{P2} that
$$\varphi(w) = -1/\beta + O(\e^{-w/\beta}) \qquad \text{as } \Re(w)\to \infty \text{ with }1-\e^{-w}\in \Delta(R,\theta).$$
We deduce by integration that in the same regime
$$A'(w)=\exp\left( \int_{[0,w]} \varphi(s)\dd s\right) \sim \gamma \e^{-w/\beta}(1+O(\e^{-w/\beta}),$$
where
$$\gamma\coloneqq \exp\left( \int_0^{\infty} (\varphi(t)+1/\beta)\dd t\right) \in (0,\infty).$$
We now see that for any $\ell\neq j_1$, we have
$$\Phi_\ell(z) \sim \frac{ \gamma (1-z)^{1/\beta}}{1/a_\ell-1/a_{j_1}} \qquad \text{as }z\to 1, z\in \Delta(R,\theta).$$
The rest of the proof of the statement for $\ell \neq \mathrm{k}^*$ uses the same path as in Section 6, except that for the transfer, we use \cite[Corollary VI.1]{FlajSed} in place of \cite[Theorem VI.3]{FlajSed}. The second claim then  follows by linearity from the first and Theorem~\ref{T1}.
\end{proof}

\vskip 3mm
\noindent
{\bf Acknowlegdment:} J.B. would like to thank Klaus Widmayer for discussions about transport equations, and notably Lemma \ref{L3}.


\end{document}